\newtheorem{theorem}{Theorem}[section]
\newtheorem{lemma}[theorem]{Lemma}
\newtheorem{definition}[theorem]{Definition}
\newtheorem{proposition}[theorem]{Proposition}
\newtheorem{corollary}[theorem]{Corollary}
\newtheorem{remark}[theorem]{Remark}
\newtheorem{question}[theorem]{Question}
\newtheorem{problem}[theorem]{Problem}
\newtheorem{example}[theorem]{Example}
\numberwithin{equation}{section}
\begin{document}

\title{Equiconnected spaces and Baire classification of separately continuous functions and their analogs}

\author{Olena Karlova}

\author{Volodymyr Maslyuchenko}

\author{Volodymyr Mykhaylyuk}

\begin{abstract}
 We investigate the Baire classification of mappings $f:X\times Y\to Z$, where $X$ belongs to a wide class of spaces, which includes all metrizable spaces,  $Y$ is a topological space, $Z$ is an equiconnected space, which are continuous in the first variable and for a dense set in $X$ these mappings are functions of a Baire class $\alpha$ in the second variable.
\end{abstract}


\maketitle

\section{Introduction}

We will denote by $P(X,Y)$ the collection of all mappings $f:X\to Y$ with a property $P$. For a mapping $f:X\times Y\to Z$ and a point $(x,y)\in X\times Y$ write $f^x(y)=f_y(x)=f(x,y)$.

Let $P$ and $Q$ be some properties of mappings. Set
$$
X_Q(f)=\{x\in X: f^x\in Q(Y,Z)\};\quad Y_P(f)=\{y\in Y: f_y\in
P(X,Z)\};
$$
$$
PQ(X\times Y,Z)=\{f\in Z^{X\times Y}:X_Q(f)=X, Y_P(f)=Y\};
$$
$$
P\overline{Q}(X\times Y,Z)=\{f\in Z^{X\times Y}:\overline{X_Q(f)}=X, Y_P(f)=Y\}.
$$

Let $C(X,Y)$ denote the collection of all continuous mappings between $X$ and $Y$.
A mapping $f:X\times Y\to Z$ is
{\it separately continuous} if $f\in CC(X\times Y,Z)$. We call a mapping $f:X\times Y\to Z$
  {\it vertically nearly separately continuous} if $f\in C\overline{C}(X\times Y,Z)$.

A mapping $f:X\to Y$ is said to be {\it a mapping of the first Baire class} or {\it a Baire-one mapping} if there exists a sequence of continuous mappings  \mbox{$f_n:X\to Y$} such that
$f_n(x)\to f(x)$ for every $x\in X$. The class of all Baire-one mappings $f:X\to Y$ will be denoted by $B_1(X,Y)$. Let $\alpha>0$ be an at most countable ordinal and assume that classes $B_\xi(X,Y)$  are already defined for all $\xi<\alpha$. A mapping $f:X\to Y$ {\it belongs to the $\alpha$-th Baire class}, $f\in B_\alpha(X,Y)$, if  there exists a sequence of mappings $f_n\in \bigcup\limits_{\xi<\alpha}B_{\xi}(X,Y)$ such that $f_n(x)\to f(x)$ for every $x\in X$. Notice that for an arbitrary sequence $(\alpha_n)_{n=1}^{\infty}$ of ordinals $\alpha_n<\alpha$ such that $\lim\limits_{n\to\infty}(\alpha_n+1)=\alpha$, we can choose a sequence $(f_n)_{n=1}^{\infty}$ in such a way that $f_n\in B_{\alpha_n}(X,Y)$ for every $n\in\mathbb N$. Define $B_{0}(X,Y)=C(X,Y)$.

 In 1898 H.~Lebesgue \cite{Leb1} proved that every real-valued separately continuous functions of two real variables is of the first Baire class. In honour of his theorem,  we  call a collection $(X,Y,Z)$  of topological spaces
   {\it a Lebesgue $\alpha$-triple}, if $CB_\alpha(X\times Y,Z)\subseteq B_{\alpha+1}(X\times Y,Z)$, where $0\le \alpha<\omega_1$.

Lebesgue's theorem was generalized by many mathematicians (see \cite{Hahn, Moran, Ru, Vera, SobPP, BanakhT, Bu1, KMas3, K4, M} and the references given there). The classical work here is the paper of W.~Rudin  \cite{Ru}, who proved the following theorem.

\begin{theorem}\label{A}
Let  $X$ be a metrizable space, $Y$ a topological space and $Z$ a locally convex space. Then $C\overline{C}(X\times Y,Z)\subseteq B_{1}(X\times Y,Z)$.
\end{theorem}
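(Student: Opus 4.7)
My plan is the standard partition-of-unity approximation that Rudin used. I will construct a sequence $(f_n)$ of continuous maps $X\times Y\to Z$ converging pointwise to $f$.

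Since $X$ is metrizable with some compatible metric $d$, it is paracompact, so for every $n\in\mathbb N$ the open cover $\{B(x,1/n):x\in X\}$ admits a locally finite open refinement $\mathcal U_n=\{U_{n,i}:i\in I_n\}$ and a continuous partition of unity $\{\varphi_{n,i}:i\in I_n\}$ subordinate to $\mathcal U_n$. Because $X_C(f)$ is dense in $X$, for each nonempty $U_{n,i}$ I can select a point $x_{n,i}\in U_{n,i}\cap X_C(f)$, so that $f^{x_{n,i}}:Y\to Z$ is continuous. Then set
\[
 f_n(x,y)=\sum_{i\in I_n}\varphi_{n,i}(x)\,f(x_{n,i},y).
\]
Because $Z$ is a topological vector space, this linear combination is meaningful, and because $\mathcal U_n$ is locally finite, in a neighbourhood of any $x_0\in X$ only finitely many $\varphi_{n,i}$ are nonzero, so $f_n$ is actually a finite sum of continuous maps on a neighbourhood of $(x_0,y_0)$; hence $f_n\in C(X\times Y,Z)$.

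It remains to verify $f_n(x,y)\to f(x,y)$ in $Z$ for every $(x,y)\in X\times Y$. This is the step where local convexity of $Z$ enters: fix a continuous seminorm $p$ on $Z$ and $\varepsilon>0$. Since $f_y\in C(X,Z)$, there exists $\delta>0$ with $p(f(x',y)-f(x,y))<\varepsilon$ whenever $d(x',x)<\delta$. For $n>2/\delta$, any $i\in I_n$ with $\varphi_{n,i}(x)\ne 0$ satisfies $x\in U_{n,i}\subseteq B(z,1/n)$ for some $z$, and hence $d(x_{n,i},x)<2/n<\delta$. Using $\sum_i\varphi_{n,i}(x)=1$ and the subadditivity and positive homogeneity of $p$,
\[
 p\bigl(f_n(x,y)-f(x,y)\bigr)\le\sum_{i\in I_n}\varphi_{n,i}(x)\,p\bigl(f(x_{n,i},y)-f(x,y)\bigr)<\varepsilon.
\]
Since the topology of the locally convex space $Z$ is generated by its continuous seminorms, this yields $f_n(x,y)\to f(x,y)$, so $f\in B_1(X\times Y,Z)$.

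The main obstacles I anticipate are bookkeeping rather than conceptual: first, ensuring the partition-of-unity construction produces a \emph{continuous} $f_n$ on all of $X\times Y$ (handled by local finiteness together with continuity of each $f^{x_{n,i}}$ in $y$ and each $\varphi_{n,i}$ in $x$), and second, leveraging local convexity to control convex combinations via seminorms — if $Z$ were merely a topological vector space, the convex-combination estimate above would fail. The density of $X_C(f)$ is used only once, to ensure representatives $x_{n,i}\in X_C(f)$ can be chosen inside each $U_{n,i}$.
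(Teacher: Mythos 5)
Your proof is correct and follows essentially the same scheme the paper itself uses to prove its generalization of this statement (Theorem~\ref{th:4.1} and Corollary~\ref{cor:4.4}): locally finite partitions of unity subordinate to covers by $\frac1n$-balls, representative points chosen in the dense set $X_C(f)$, the approximants $f_n(x,y)=\sum_i\varphi_{n,i}(x)f(x_{n,i},y)$, continuity of $f_n$ via local finiteness, and convergence via convexity. The only cosmetic difference is that you exploit continuous seminorms of the locally convex TVS where the paper works with the abstract condition $\lambda^\infty(W_1)\subseteq W$ so as to cover locally convex equiconnected spaces.
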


A collection $(X,Y,Z)$ of topological spaces is called {\it a Rudin $\alpha$-triple}, if $C\overline{B}_\alpha(X\times Y,Z)\subseteq B_{\alpha+1}(X\times Y,Z)$, where $0\le \alpha<\omega_1$.

The following question is still unanswered.
\begin{problem}
Do there exist a metrizable space $X$, a topological space $Y$ and a topological vector space (or, more general, an equiconnected space) $Z$ such that $(X,Y,Z)$ is not a Rudin $0$-triple?
\end{problem}

The following result was proved by V.~Maslyuchenko and A.~Kalancha \cite{KMas3} .

\begin{theorem}\label{B}
  Let $X$ be a metrizable space with finite \v{C}ech-Lebesgue dimension, $Y$ a topological space, $Z$ a topological vector space and $0\le\alpha<\omega_1$. Then $(X,Y,Z)$ is a Rudin $\alpha$-triple.
\end{theorem}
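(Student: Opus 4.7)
The plan is to adapt Rudin's approximation technique behind Theorem~\ref{A} to arbitrary $\alpha$, using the finite-dimensional structure of $X$ together with the linear structure of $Z$ in an essential way. Starting from $f\in C\overline{B}_\alpha(X\times Y,Z)$, so that $f_y\in C(X,Z)$ for every $y$ and $D:=\{x:f^x\in B_\alpha(Y,Z)\}$ is dense in $X$, I would construct approximants $f_n\in B_\alpha(X\times Y,Z)$ with $f_n\to f$ pointwise; the defining sequence then witnesses $f\in B_{\alpha+1}(X\times Y,Z)$.

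For the construction, set $d:=\dim X<\infty$. By Ostrand's theorem, valid in metrizable spaces, for each $n$ there is an open cover $\mathcal U_n=\bigcup_{k=0}^{d}\mathcal U_n^k$ of $X$ with $\operatorname{mesh}\mathcal U_n<1/n$ in which every subfamily $\mathcal U_n^k$ is discrete. Pick a continuous partition of unity $\{\varphi_{n,U}:U\in\mathcal U_n\}$ subordinate to $\mathcal U_n$ and, using density of $D$, select $x_{n,U}\in U\cap D$ for every $U\in\mathcal U_n$. Define
$$
f_n(x,y)=\sum_{U\in\mathcal U_n}\varphi_{n,U}(x)\,f(x_{n,U},y)=\sum_{k=0}^{d}g_n^{k}(x,y),\qquad g_n^k(x,y)=\sum_{U\in\mathcal U_n^k}\varphi_{n,U}(x)\,f(x_{n,U},y);
$$
at any $x$ at most $d+1$ summands are nonzero, and inside each $g_n^k$ only a single summand is nonzero on a neighbourhood of any $x$, by discreteness of $\mathcal U_n^k$.

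The principal technical step, and the main obstacle, is to show that $f_n$ belongs to $B_\alpha(X\times Y,Z)$ rather than merely to $B_{\alpha+1}$. I would isolate the following lemma and prove it by transfinite induction on $\alpha$: \emph{if $\{V_\lambda\}_{\lambda\in\Lambda}$ is a discrete family of open sets in $X$, $\psi_\lambda\in C(X,[0,1])$ with $\operatorname{supp}\psi_\lambda\subseteq V_\lambda$, and $h_\lambda\in B_\alpha(Y,Z)$ for every $\lambda$, then the map $(x,y)\mapsto\sum_\lambda\psi_\lambda(x)h_\lambda(y)$ lies in $B_\alpha(X\times Y,Z)$.} The base case $\alpha=0$ uses that discreteness reduces the sum locally in $x$ to a single product of a continuous scalar with a continuous $Z$-valued map, which is jointly continuous by joint continuity of scalar multiplication in $Z$. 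For the inductive step, choose a common sequence $(\xi_m)$ with $\xi_m<\alpha$ and $\lim(\xi_m+1)=\alpha$ and representations $h_\lambda=\lim_m h_{\lambda,m}$ with $h_{\lambda,m}\in B_{\xi_m}(Y,Z)$; the analogously defined sums are of class $B_{\xi_m}$ by the induction hypothesis and converge pointwise to $\sum_\lambda\psi_\lambda h_\lambda$ thanks again to the local one-term reduction. Applying the lemma to each $g_n^k$ and adding the $d+1$ resulting functions yields $f_n\in B_\alpha(X\times Y,Z)$.

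The last step is the pointwise convergence $f_n\to f$. Fix $(x,y)$ and a neighbourhood $V$ of $0$ in $Z$; choose a balanced neighbourhood $W$ of $0$ with $W+\dots+W\subseteq V$ ($d+1$ summands), and, by continuity of $f_y$ at $x$, some $\delta>0$ with $f(x',y)-f(x,y)\in W$ whenever $d(x',x)<\delta$. For $n>1/\delta$ the at most $d+1$ points $x_{n,U}$ with $\varphi_{n,U}(x)\neq 0$ lie in $B(x,1/n)\subseteq B(x,\delta)$, so each summand of
$$
f_n(x,y)-f(x,y)=\sum_{U}\varphi_{n,U}(x)\bigl(f(x_{n,U},y)-f(x,y)\bigr)
$$
belongs to $W$ (using $W$ balanced and $\varphi_{n,U}(x)\in[0,1]$), whence the whole sum lies in $V$. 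Hence $f_n(x,y)\to f(x,y)$ and $f\in B_{\alpha+1}(X\times Y,Z)$. The use of a genuinely discrete (rather than only locally finite) decomposition provided by Ostrand's theorem is precisely what keeps the auxiliary induction from consuming an extra Baire class at each step.
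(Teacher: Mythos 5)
Your argument is correct, and its skeleton --- approximants $f_n(x,y)=\sum_U\varphi_{n,U}(x)f(x_{n,U},y)$ with basepoints chosen in the dense set $X_{B_\alpha}(f)$, a lemma guaranteeing $f_n\in B_\alpha$ rather than merely $B_{\alpha+1}$, and a convergence argument exploiting a uniform bound $d+1$ on the number of nonzero summands at each point --- matches the paper's. Note, however, that the paper does not prove Theorem~\ref{B} directly: it is quoted from \cite{KMas3} and re-obtained as Corollary~\ref{cor:5.1} of Theorem~\ref{th:4.3}, which works for arbitrary equiconnected $Z$ and strongly countably dimensional $X$. Where you invoke Ostrand's theorem to split each cover into $d+1$ discrete families, the paper uses Engelking's Theorem~5.1.10 to produce a locally finite refinement whose supports form a uniformly pointwise finite sequence; where you use ordinary linear sums, balanced neighbourhoods and $W+\dots+W\subseteq V$, the paper uses $\lambda$-sums and the continuity of $\lambda_m$ at $(z_0,\dots,z_0,\alpha_1,\dots,\alpha_m)$ (Theorem~\ref{th:4.2}); your version is the natural specialization of that argument to the linear case, and it buys a shorter proof at the cost of not extending to equiconnected $Z$. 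Two small remarks. First, your closing claim that discreteness is what prevents the loss of a Baire class in the auxiliary lemma is a misattribution: local finiteness already suffices for class preservation (this is exactly the paper's Theorem~\ref{th:2.3}), since at every point the sum reduces, on a whole neighbourhood, to a finite sum of jointly continuous, respectively class-$\xi_m$, maps; what the Ostrand decomposition genuinely buys is the bound $d+1$ on the number of nonzero terms, which is what the convergence step needs when $Z$ is not assumed locally convex. Second, you tacitly use that a finite sum of $B_\alpha$ maps into a topological vector space is again of class $B_\alpha$; this is true and proved by the same transfinite induction, but it deserves a line.
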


In \cite{SobPP} O.~Sobchuk introduced the class of PP-spaces (see definition in Section~\ref{sec:PP}), which includes the class of all metrizable spaces, and obtained the following fact (see \cite{Sob2}).
\begin{theorem}\label{C}
  A collection $(X,Y,Z)$ is a Lebesgue $\alpha$-triple if $X$ is a PP-space, $Y$ is a topological space and $Z$ is a locally convex space.
\end{theorem}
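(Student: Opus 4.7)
The plan is to construct, for each $n\in\mathbb N$, an approximation $f_n\in B_\alpha(X\times Y,Z)$ with $f_n\to f$ pointwise; this immediately gives $f\in B_{\alpha+1}(X\times Y,Z)$ by definition of the Baire class. The construction is the direct analogue of the partition-of-unity scheme used in Rudin's Theorem~\ref{A}, with the metric on $X$ replaced by structural data coming from the PP-property. Specifically, the PP-property (defined in Section~\ref{sec:PP}) should provide, for every $n$, a locally finite open cover $\{U_i^n\}_{i\in I_n}$ of $X$ with a subordinate continuous partition of unity $\{\lambda_i^n\}_{i\in I_n}$, chosen so that the covers are ``fine'' in the following sense: for every $x_0\in X$ and every neighbourhood $V$ of $x_0$, all indices $i$ with $\lambda_i^n(x_0)>0$ satisfy $U_i^n\subseteq V$ for sufficiently large $n$.

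Picking representatives $x_i^n\in U_i^n$ and using the local convexity of $Z$, one then sets
$$f_n(x,y)=\sum_{i\in I_n}\lambda_i^n(x)\,f(x_i^n,y),$$
a sum that is locally finite in $x$ and defines a convex combination in $Z$. For fixed $i$, the function $(x,y)\mapsto f(x_i^n,y)$ depends only on $y$ and lies in $B_\alpha(Y,Z)$; by transfinite induction on $\alpha$ (starting from the trivial continuous case) it extends to a map of class $B_\alpha$ on $X\times Y$. Multiplication by the continuous scalar $\lambda_i^n(x)$ preserves the class, and local finiteness of the sum, together with local convexity ensuring that finite convex combinations of Baire-$\alpha$ maps remain Baire-$\alpha$, yields $f_n\in B_\alpha(X\times Y,Z)$.

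Pointwise convergence is a direct consequence of continuity in the first variable. Fix $(x_0,y_0)\in X\times Y$ and a neighbourhood $W$ of the origin in $Z$; by local convexity we may assume $W$ is balanced and convex. Continuity of $f_{y_0}$ at $x_0$ gives a neighbourhood $V$ of $x_0$ with $f(x,y_0)-f(x_0,y_0)\in W$ for $x\in V$. For $n$ large the fineness condition forces $x_i^n\in V$ whenever $\lambda_i^n(x_0)>0$, so
$$f_n(x_0,y_0)-f(x_0,y_0)=\sum_{i\in I_n}\lambda_i^n(x_0)\bigl(f(x_i^n,y_0)-f(x_0,y_0)\bigr)\in W,$$
the last inclusion because the right-hand side is a convex combination of elements of $W$.

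The main obstacle will be the first step: extracting from the abstract PP-property a sequence of partitions of unity with the required shrinking behaviour, uniformly over $X$. A secondary but genuine technical point is the rigorous verification that a locally finite, convex-combination style sum of Baire-$\alpha$ maps is itself jointly Baire-$\alpha$ on $X\times Y$ rather than merely separately so; this is where local convexity of $Z$ is essential, since it lets one absorb the convex-combination operations into the pointwise-limit constructions that build up the Baire hierarchy.
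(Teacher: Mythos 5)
Your proposal is correct and follows essentially the same route as the paper: although Theorem~\ref{C} itself is only cited to Sobchuk, the paper's proof of the analogous Theorem~\ref{th:4.1} uses exactly this scheme --- the partitions of unity and representative points with your ``fineness'' property are supplied verbatim by Definition~\ref{def:2} and condition~(\ref{eq:3}) (so your first ``main obstacle'' is vacuous), the fact that the locally finite convex-combination sum $f_n$ lies in $B_\alpha(X\times Y,Z)$ is the paper's Theorem~\ref{th:2.3} (proved by the induction on $\alpha$ you sketch), and pointwise convergence follows from continuity in the first variable together with convexity of the chosen neighbourhood, just as you argue. No gaps.
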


The following two results were established by  T.~Banakh \cite{BanakhT}, who introduced  metrically quarter-stratifiable spaces and studied their applications to Baire classification of separately continuous functions with values in equiconnected spaces (see definitions in Section~\ref{sec:1} and Section~\ref{sec:PP}).

\begin{theorem}\label{D}
Let  $X$ be a metrically quarter-stratifiable space, $Y$ a topological space
and $Z$ an equiconnected space. If, moreover, $X$ is  paracompact and strongly countable-dimensional  or $Z$ is locally convex, then  $(X,Y,Z)$ is a Lebesgue $0$-triple.
\end{theorem}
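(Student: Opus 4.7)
The plan is to exhibit an arbitrary $f\in CC(X\times Y,Z)$ as the pointwise limit of a sequence of jointly continuous maps $f_n:X\times Y\to Z$; by the definition of $B_1$ this will put $f$ in $B_1(X\times Y,Z)$ and so show that $(X,Y,Z)$ is a Lebesgue $0$-triple.

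I would first use the metric quarter-stratifiability of $X$ to obtain, for every $n\in\mathbb N$, an open cover $\{U_i^n:i\in I_n\}$ of $X$ of mesh at most $1/n$ (with respect to the metric provided by the quarter-stratification) together with a continuous partition of unity $(\varphi_i^n)_{i\in I_n}$ subordinated to it; this is precisely the combinatorial output of Banakh's machinery for such spaces. For each $i$ pick a point $x_i^n\in U_i^n$ and set
$$
f_n(x,y)=\Lambda\bigl((\varphi_i^n(x))_i,(f(x_i^n,y))_i\bigr),
$$
where $\Lambda$ denotes a continuous ``weighted $\lambda$-combination'' built out of the equiconnecting map $\lambda:Z\times Z\times[0,1]\to Z$ of $Z$. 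The two sets of hypotheses bifurcate exactly at this point: if $X$ is paracompact and strongly countable-dimensional, $(\varphi_i^n)$ may be chosen locally finite of locally bounded order, so $\Lambda$ reduces to a finite iterated application of $\lambda$; if instead $Z$ is locally convex, then $\Lambda$ can be the usual convex combination $\sum_i\varphi_i(x)z_i$, which makes sense even with countable support.

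Each $f_n$ is then jointly continuous on $X\times Y$: continuity in the first variable uses continuity of every $\varphi_i^n$ together with continuity of $\Lambda$, while continuity in the second variable uses continuity of each section $f(x_i^n,\cdot):Y\to Z$ coming from the right-hand $C$ in $CC$. For pointwise convergence fix $(x_0,y_0)$ and a neighbourhood $V$ of $f(x_0,y_0)$. Continuity of $f_{y_0}=f(\cdot,y_0)$ at $x_0$ gives $\delta>0$ so that $f_{y_0}$ maps the $\delta$-ball around $x_0$ into some $V'\subseteq V$; once $1/n<\delta$ every index $i$ with $\varphi_i^n(x_0)>0$ satisfies $x_i^n\in U_i^n\ni x_0$, so that every $f(x_i^n,y_0)$ entering the expression of $f_n(x_0,y_0)$ lies in $V'$, and the idempotency $\lambda(z,z,t)=z$ together with continuity of $\Lambda$ forces $f_n(x_0,y_0)\in V$ for all large $n$.

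The main technical obstacle is the construction of $\Lambda$ itself: the equiconnecting structure supplies only a binary geodesic $\lambda$, so in the strongly countable-dimensional setting one must iterate $\lambda$ in a way that is insensitive to the order of pairings and continuous when weights pass through $0$, which is exactly where idempotency $\lambda(z,z,t)=z$ and the local bound on the order of $(\varphi_i^n)$ are both used essentially. In the locally convex alternative the difficulty shifts to justifying continuous dependence of an infinite convex sum on both the weights and the values, which is the standard argument absorbed by local convexity. Apart from this $\Lambda$-construction, every other step is routine verification.
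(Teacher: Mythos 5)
First, a remark on provenance: the paper does not prove Theorem~\ref{D} at all --- it is quoted from Banakh \cite{BanakhT}. But the method you propose is exactly the method the paper uses to prove its own strengthenings (Theorems~\ref{th:4.1}, \ref{th:4.2} and \ref{th:4.3}): sample $f$ at points $x_{i,n}$, recombine the sections $f^{x_{i,n}}$ with a locally finite partition of unity via the iterated equiconnecting map (the $\lambda$-sums of Section~\ref{sec:2}), obtain joint continuity of the approximants from Theorem~\ref{th:2.3}, and split the proof of pointwise convergence according to whether $Z$ is locally convex or the supports have uniformly bounded order. So the architecture is the right one.

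There is, however, a genuine gap in your convergence step. You extract from metric quarter-stratifiability covers of ``mesh at most $1/n$'' in the auxiliary metric and then argue that continuity of $f_{y_0}$ at $x_0$ gives a $\delta$-ball mapped into $V'$. But the metrizable topology is only \emph{weaker} than the topology of $X$: the section $f_{y_0}$ is continuous for the original topology and need not be continuous for the metric, so no metric $\delta$-ball need be mapped into $V'$, and ``mesh $\to 0$'' in the metric does not force the sample points $x_i^n$ to converge to $x_0$ in the original topology. What is actually needed --- and what quarter-stratifiability, as opposed to the mere existence of a weaker metrizable topology, provides --- is condition~(\ref{eq:3}): for every original-topology neighbourhood $U$ of $x_0$ there is $n_0$ such that for all $n\ge n_0$, $x_0\in{\rm supp}\,\varphi_{i,n}$ implies $x_{i,n}\in U$. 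Achieving this requires a careful, generally asymmetric choice of the points $x_{i,n}$ relative to the supports; compare Proposition~\ref{th:5.1}, where for the Sorgenfrey line the point $x_{i,n}$ must be taken in the interval to the \emph{right} of ${\rm supp}\,\varphi_{i,n}$. A secondary inaccuracy: a locally convex equiconnected space carries no linear structure, so there is no ``usual convex combination $\sum_i\varphi_i(x)z_i$''; in both alternatives one uses the same locally finite iterated $\lambda$-sum, and local convexity enters only in the convergence estimate, via $\lambda^{\infty}(W_1)\subseteq W$, to make a single neighbourhood $W_1$ work uniformly in the (possibly unbounded) number $k$ of nonzero weights --- precisely the uniformity that your phrase ``idempotency together with continuity of $\Lambda$'' glosses over.
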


\begin{theorem}\label{E}
 Let $1\le \alpha<\omega_1$, $X$ a metrically quarter-stratifiable space, $Y$ a topological space and $Z$ a contractible space. Then
 $(X,Y,Z)$ is a Lebesgue $\alpha$-triple.
\end{theorem}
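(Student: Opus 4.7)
The strategy is to show that every $f\in CB_\alpha(X\times Y,Z)$ is the pointwise limit of a sequence of jointly Baire-$\alpha$ mappings $g_n:X\times Y\to Z$, which puts $f$ in $B_{\alpha+1}(X\times Y,Z)$. Each $g_n$ will be built from the values $f(x_F,\cdot)$ at representative points $x_F$ chosen in the members of a shrinking $\sigma$-discrete closed cover of $X$ furnished by the metric quarter-stratifiable structure, assembled into a single mapping via the homotopy that witnesses contractibility of $Z$. The proof thus combines the combinatorial control on $X$ (from quarter-stratifiability) with the topological flexibility of $Z$ (from contractibility), while the hypothesis $\alpha\ge 1$ provides the slack needed to absorb the Baire-one discontinuities introduced by the discretization in $x$.

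I would first use the metric quarter-stratifiable structure to choose, for each $n\in\mathbb N$, a $\sigma$-discrete closed cover $\mathcal{F}_n=\bigcup_m\mathcal{F}_{n,m}$ of $X$ whose elements have $d$-diameter at most $1/n$ (for the witnessing metric $d$) and whose mesh at every point of $X$ shrinks in the topology of $X$. For each $F\in\mathcal F_n$ fix a representative $x_F\in F$, a base point $z_0\in Z$, and a contraction $H:Z\times[0,1]\to Z$ with $H(z,0)=z_0$ and $H(z,1)=z$. For each discrete subfamily $\mathcal{F}_{n,m}$ the characteristic function $\chi_F$ of a member $F\in\mathcal{F}_{n,m}$ is Baire-one on $X$, so the mapping $(x,y)\mapsto H\bigl(f(x_F,y),\chi_F(x)\bigr)$ equals $f(x_F,y)$ on $F\times Y$ and $z_0$ off $F\times Y$, and is jointly of class $B_{\max(1,\alpha)}=B_\alpha$ by continuity of $H$ together with $\alpha\ge 1$. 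Because $\mathcal{F}_{n,m}$ is discrete, these pieces combine into a single Baire-$\alpha$ map $g_{n,m}:X\times Y\to Z$, and a further pointwise limit in $m$ yields the required approximation $g_n\in B_\alpha(X\times Y,Z)$.

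Pointwise convergence $g_n\to f$ will then follow from the mesh-shrinking property: for each $(x,y)$ and all sufficiently large $n$, $x$ lies in some $F\in\mathcal{F}_n$ whose representative $x_F$ is close to $x$ in the topology of $X$, so continuity of $f_y$ gives $g_n(x,y)=f(x_F,y)\to f(x,y)$. The main difficulty I foresee is the interface between the combinatorial and the topological data: the metric $d$ witnessing quarter-stratifiability need not generate the topology of $X$, so one must exploit precisely the compatibility clauses built into that notion to guarantee both the Baire-one character of the $\chi_F$ on $X$ and the joint convergence of the $g_n$ towards $f$. The hypothesis $\alpha\ge 1$ is used in an essential way, because the Baire-one step in $x$ introduced by the characteristic functions $\chi_F$ can be absorbed by the $B_\alpha$ regularity in $y$ only when at least one limit operation is available in the $y$-direction, which is precisely why the $\alpha=0$ case requires the stronger hypotheses on $X$ or $Z$ seen in Theorem~\ref{D}.
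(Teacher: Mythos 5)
First, a point of orientation: the paper does not prove this statement itself --- it is quoted from Banakh \cite{BanakhT} --- but it establishes the closely related Theorem~\ref{th:6.6} with machinery (Lemma~\ref{lemma1.7}, Propositions~\ref{pr:7.1} and~\ref{pr:21}) that is the natural benchmark here. Your overall scheme --- discretize in $x$ along finer and finer $\sigma$-discrete decompositions coming from the auxiliary metric, replace $f$ on each piece by $f(x_F,\cdot)$, glue with the contraction $H$, and let the mesh shrink --- is the right shape and matches that machinery in outline.

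There is, however, a genuine gap at the step ``a further pointwise limit in $m$ yields the required approximation $g_n\in B_\alpha(X\times Y,Z)$.'' A pointwise limit of maps of class $B_\alpha$ is in general only of class $B_{\alpha+1}$, so as written your $g_n$ land in $B_{\alpha+1}$ and the final limit over $n$ only gives $f\in B_{\alpha+2}$. To keep the piecewise map $g_n(x,y)=f(x_F,y)$ (for $x\in F$) inside $B_\alpha$ one must argue as in Proposition~\ref{pr:21}: exhaust each piece by an increasing sequence of zero sets, separate these by discrete families of co-zero sets (weak collectionwise normality of the pseudometrizable topology), and --- crucially --- build the approximants not from $f(x_F,\cdot)$ itself but from maps $g_{F,k}\in B_{\alpha_k}(Y,Z)$ with $\alpha_k<\alpha$ converging to $f(x_F,\cdot)$, interleaving the limit in $k$ with the exhaustion. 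That is where $\alpha\ge 1$ really enters (one needs ordinals strictly below $\alpha$ to spend on the gluing), not merely to ``absorb the Baire-one step of $\chi_F$''; relatedly, your intermediate claim that $(x,y)\mapsto H\bigl(f(x_F,y),\chi_F(x)\bigr)$ is jointly of class $B_{\max(1,\alpha)}$ is unjustified for a general contractible $Z$, since one cannot simply compose Baire classes through $H$ without an inductive argument of the type of Proposition~\ref{pr:7.1}. Finally, the ``main difficulty'' you flag --- that the representatives $x_F$ must converge to $x$ in the topology of $X$ and not just in $d$ --- is not a side issue to be foreseen but a property you must secure by choosing the covers along the quarter-stratifying function (equivalently, via the PP-space condition~(\ref{eq:3})); without it the pointwise convergence $g_n\to f$ fails and the whole argument collapses.
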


Obviously, every Rudin $\alpha$-triple is a Lebesgue $\alpha$-triple. In \cite[Example~5.7]{BanakhT} was given an example which shows that Theorems~\ref{C} and \ref{D} cannot be generalized to the case when $(X,Y,Z)$ is a Rudin $0$-triple. Therefore, it is naturally to ask if we can replace Lebesgue triple with a Rudin triple in Theorems~\ref{C} and \ref{E}, and under which assumptions on $X$ analogs of Theorems~\ref{B}, \ref{C}, \ref{D} and \ref{E} take place? To answer these questions, we first introduced notions of a convex combination and an $\lambda$-sum of elements of an equiconnected space (see Sections~\ref{sec:1} and \ref{sec:2}) and describe some of their properties. In Section~\ref{sec:PP} we introduce the class of strong PP-spaces and find out that  $(X,Y,Z)$ is a Rudin $\alpha$-triple for $\alpha\ge 0$ if $X$ is a strong PP-space, $Y$ is a topological space and  $Z$ is a locally convex equiconnected space. Further, in Section~\ref{sec:3} we prove that $(X,Y,Z)$ is  a Rudin $\alpha$-triple for $\alpha\ge 0$ if $X$ is a paracompact Hausdorff strongly countable-dimensional strong  PP-space, $Y$ is a topological space and $Z$ is an equiconnected space. This gives the generalization of Theorem~\ref{B} for equiconnected space $Z$ and for Rudin triples. Functions of the classes $C\overline{B}_\alpha$ with $\alpha>0$, are investigated in Section~\ref{sec:4}. There we show that if $X$ is a strong PP-space, $Y$ is a topological space and $Z$ is a conractible space, then $(X,Y,Z)$ is a Rudin $\alpha$-triple. Finally, in Section~\ref{sec:5} we give several examples of PP-spaces which are not strong PP-spaces, consequently, these examples show that Theorem~\ref{E} cannot be generalized to Rudin triples.

\section{Convex combinations in an equiconnected space and their properties}\label{sec:1}

\begin{definition}\label{def:4}{\rm
  {\it An equiconnected space} is a pair $(X,\lambda)$, consisting of a topological space $X$ and a continuous function $\lambda:X\times X\times [0,1]\to X$ such that

{ (i)} $\lambda(x,y,0)=x$;

{ (ii)} $\lambda(x,y,1)=y$;

{ (iii)} $\lambda(x,x,t)=x$\\ for all $x,y\in X$ and $t\in [0,1]$.}
\end{definition}
\noindent The simplest example of an equiconnected space is a convex subset of a topological vector space, where  $\lambda$ can be defined as  $\lambda(x,y,t)=(1-t)x+t y$.

For every $n\in\mathbb N$ denote $$S_n=\{\alpha=(\alpha_1,\dots ,\alpha_n)\in\mathbb R^n: \,\,\alpha_1,\dots ,\alpha_n\geq 0,\,\,\alpha_1+\dots +\alpha_n=1\}.$$

Let $(X,\lambda)$ be an equiconnected space. By induction in $n\in\mathbb N$ we define a sequence $(\lambda_n)_{n=1}^{\infty}$ of mappings $\lambda_n:X^n\times S_n\to X$.

For $n=1$ set $\lambda_1(x,1)=x$ for every $x\in X$. Now for  $n\in\mathbb N$, $x_1, \dots x_{n+1}\in X$ and $(\alpha_1,\dots ,\alpha_{n+1})\in S_{n+1}$ let
\begin{gather*}
\lambda_{n+1}(x_1, \dots x_{n+1},\alpha_1,\dots ,\alpha_{n+1})=\\=\lambda_n(\lambda(x_1,x_2, \frac{\alpha_2}{\alpha_1+\alpha_2}), x_3, \dots x_{n+1},\alpha_1+\alpha_2, \alpha_3 \dots ,\alpha_{n+1}), \mbox{\,\,if\,\,} \alpha_1+\alpha_2>0;   \\
\lambda_{n+1}(x_1, \dots x_{n+1},\alpha_1,\dots ,\alpha_{n+1})=\\=\lambda_n(x_2, x_3, \dots x_{n+1},\alpha_2, \alpha_3 \dots ,\alpha_{n+1}), \mbox{\,\,if\,\,} \alpha_1+\alpha_2=0.
\end{gather*}

\begin{definition}\label{def:5}{\rm For $n\in\mathbb N$, $(\alpha_1,\dots,\alpha_n)\in S_n$ and any elements $x_1,\dots x_n$ of an equiconnected space $(X,\lambda)$ the element $\lambda_n(x_1,\dots x_n,\alpha_1,\dots,\alpha_n)$ is called {\it a convex combination of the elements $x_1,\dots x_n$ with the coefficients $\alpha_1,\dots,\alpha_n$}.}
\end{definition}

\begin{proposition}\label{pr:2.1}
  Let $(X,\lambda)$ be an equiconnected space. Then $(\lambda_n)_{n=1}^{\infty}$ has  the following properties:

 (i) if  $n\ge 2$, $1\leq k\leq n$ and $\alpha_k=0$, then
 \begin{gather*}
   \lambda_{n}(x_1, \dots x_{n},\alpha_1,\dots ,\alpha_{n})=\\=\lambda_{n-1}(x_1, \dots x_{k-1}, x_{k+1}\dots x_{n},\alpha_1, \dots \alpha_{k-1}, \alpha_{k+1}\dots \alpha_{n})
 \end{gather*}
 \qquad\,\, for any $x_1,\dots x_n\in X$ and $(\alpha_1,\dots ,\alpha_n)\in S_n$;

 (ii) $\lambda_{n}(x, \dots x,\alpha_1,\dots ,\alpha_{n})=x$ for any $x\in X$ and $(\alpha_1,\dots ,\alpha_n)\in S_n$;

(iii) every $\lambda_n$ is continuous.
\end{proposition}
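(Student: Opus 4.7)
The plan is to establish all three properties by simultaneous induction on $n$. The base case $n=1$ is trivial, since $S_1=\{1\}$ and $\lambda_1(x,1)=x$. Suppose (i)--(iii) hold for $\lambda_m$ with $m\le n$, and write $\beta=\alpha_1+\alpha_2$.

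For (i) I would split on whether $k\in\{1,2\}$ or $k\ge 3$, and on the sign of $\beta$. When $k=1$ and $\beta>0$ one has $\beta=\alpha_2$, so $\alpha_2/\beta=1$ and $\lambda(x_1,x_2,1)=x_2$ collapses the defining recursion to $\lambda_n(x_2,x_3,\dots,x_{n+1},\alpha_2,\alpha_3,\dots,\alpha_{n+1})$; the case $k=2$, $\beta>0$ is symmetric via $\lambda(x_1,x_2,0)=x_1$. For $k\ge 3$ the inductive hypothesis of (i) applied to $\lambda_n$ at position $k-1$ gives the claim after rewriting through either branch of the recursion, and the case $\beta=0$ is immediate from the second branch combined with induction. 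For (ii), both branches of the recursion collapse on the diagonal via axiom (iii) of equiconnectedness, $\lambda(x,x,t)=x$: $\lambda_{n+1}(x,\dots,x,\alpha_1,\dots,\alpha_{n+1})$ reduces to either $\lambda_n(x,\dots,x,\beta,\alpha_3,\dots)$ or $\lambda_n(x,\dots,x,0,\alpha_3,\dots)$, each equal to $x$ by the inductive hypothesis.

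The substantive step is (iii). Decompose $X^{n+1}\times S_{n+1}$ into the open set $\Omega_+=\{\beta>0\}$, where the defining formula is a composition of $\lambda$, of the continuous map $(\alpha_1,\alpha_2)\mapsto(\alpha_2/\beta,\beta)$ (well-defined on $\Omega_+$), and of $\lambda_n$; and the set $\Omega_0=\{\alpha_1=\alpha_2=0\}$, on which the second branch is a composition continuous by induction. The delicate case is continuity at a point $p_0\in\Omega_0$ approached by nets from $\Omega_+$, because the auxiliary argument $y=\lambda(x_1,x_2,\alpha_2/\beta)$ need not converge, as $\alpha_2/\beta$ may be arbitrary in $[0,1]$. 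The key observation is that, by property (i) just proved,
$$
\lambda_n(y,x_3,\dots,x_{n+1},0,\alpha_3,\dots,\alpha_{n+1})=\lambda_{n-1}(x_3,\dots,x_{n+1},\alpha_3,\dots,\alpha_{n+1})
$$
is independent of $y$, while the set $K=\lambda(\{x_1\}\times\{x_2\}\times[0,1])$ is compact. Given an open neighborhood $W$ of this common limit value, continuity of $\lambda_n$ at each $(y,x_3,\dots,0,\alpha_3,\dots)$ with $y\in K$ yields product-open neighborhoods $U_y\times V_y$ mapped into $W$; compactness of $K$ gives a finite subcover $U_{y_1}\cup\dots\cup U_{y_l}\supseteq K$, and continuity of $\lambda$ together with compactness of $[0,1]$ lets one choose neighborhoods of $x_1$ and $x_2$ whose $\lambda$-image lies in this union uniformly in $t\in[0,1]$. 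Intersecting the $V_{y_i}$ and pulling back through the continuous projection $(\alpha_1,\alpha_2,\alpha_3,\dots)\mapsto(\alpha_1+\alpha_2,\alpha_3,\dots)$ then produces the required neighborhood of $p_0$. This last compactness step is the main obstacle, as the lack of convergence of $y$ must be compensated for by trapping its accumulation locus in a compact set along which $\lambda_n$ is uniformly controlled.
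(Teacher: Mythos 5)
Your proposal is correct and follows essentially the same route as the paper: an open/degenerate case split on $\alpha_1+\alpha_2$, with the degenerate case handled by observing via (i) that $\lambda_n(y,x_3,\dots,x_{n+1},0,\alpha_3,\dots,\alpha_{n+1})$ is independent of $y$, trapping the auxiliary point $\lambda(x_1,x_2,\cdot)$ in the compact set $\lambda(\{x_1\}\times\{x_2\}\times[0,1])$, and extracting a uniform neighborhood by compactness. The paper's proof differs only cosmetically (it states the uniform neighborhood $U\supseteq A$ directly rather than exhibiting the finite subcover, and uses $V_1+V_2\subseteq V$ in place of your pullback through the sum map).
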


\begin{proof}
By induction on $n$ properties $(i)$ and $(ii)$ easily follow from the definition of functions $\lambda_n$.

$(iii)$. Clearly, $\lambda_1$ is continuous. Assume that functions $\lambda_1,\dots,\lambda_n$ are continuous and show that $\lambda_{n+1}$ is continuous.

Fix points $x_1,\dots, x_{n+1}\in X$ and a vector $(\alpha_1,\dots, \alpha_{n+1})\in S_{n+1}$, and prove that $\lambda_{n+1}$ is continuous at $(x_1,\dots, x_{n+1},\alpha_1,\dots, \alpha_{n+1})$.

Let
\begin{gather*}
G=\{(y_1,\dots y_{n+1},\beta_1,\dots \beta_{n+1}): y_1,\dots y_{n+1}\in X,\phantom{,,,,,,,,,,,,,,,,,,,,,}\\\phantom{,,,,,,,,,,,,,,,,,,,,,,,,,,,,} (\beta_1,\dots \beta_{n+1})\in S_{n+1},\,\beta_1+\beta_2>0 \}.
\end{gather*}
Since $G$ is open and functions $\lambda$ and $\lambda_n$ are continuous, $\lambda_{n+1}$ is continuous at every point of $G$, in particular, $\lambda_{n+1}$ is continuous at
$(x_1,\dots, x_{n+1},\alpha_1,\dots, \alpha_{n+1})$ if $\alpha_1+\alpha_2>0$.

Now let $\alpha_1=\alpha_2=0$, $z=\lambda_{n+1}(x_1,\dots, x_{n+1},\alpha_1,\dots, \alpha_{n+1})$ and let $W$ be an open neighborhood of $z$ in $X$. Consider the set $A=\{\lambda(x_1,x_2,\alpha):0\leq \alpha\leq 1\}$. Observe that $A$ is compact and  $\lambda_n(a,x_3,\dots, x_{n+1},0,\alpha_3,\dots, \alpha_{n+1})=z$ for every $a\in A$ by $(i)$. Since $\lambda_n$ is continuous, there exist an open set $U$ in $X$ with $A\subseteq U$, open neighborhoods $U_3, \dots, U_{n+1}$ in $X$ of points $x_3,\dots, x_{n+1}$, respectively, and open neighborhoods $V, V_3,\dots,V_{n+1}$ of points $0,\alpha_3,\dots, \alpha_{n+1}$ in $[0,1]$ such that $\lambda_{n}(y,z_3, \dots z_{n+1},\gamma,\beta_3,\dots, \beta_{n+1})\in W$ for any $y\in U$, \mbox{$z_3\in U_3$},..., $z_{n+1}\in U_{n+1}$ and $\gamma\in V,\beta_3\in V_3,\dots, \beta_{n+1}\in V_{n+1}$. Now, taking into account that $A\subseteq U$ and that $\lambda_2$ is continuous, we choose neighborhoods $U_1$ and $U_2$ of $x_1$ and $x_2$ such that $\lambda(z_1,z_2,\beta)\in U$ for any $z_1\in U_1$, $z_2\in U_2$ and $\beta\in [0,1]$. Moreover, choose neighborhoods $V_1$ and $V_2$ of $0$ in $[0,1]$ such that $V_1+V_2\subseteq V$. Show that
$$\lambda_{n+1}(z_1, \dots z_{n+1},\beta_1,\dots ,\beta_{n+1})\in W$$
for any $z_1\in U_1,\dots z_{n+1}\in U_{n+1}$ and $\beta_1\in V_1, \dots, \beta_{n+1}\in V_{n+1}$.

If $\beta_1=\beta_2=0$, then, taking into account that $z_2\in U$ and $0\in V$, we conclude that
$$\lambda_{n+1}(z_1, \dots z_{n+1},\beta_1,\dots ,\beta_{n+1})=\lambda_{n}(z_2,z_3 \dots z_{n+1},0,\beta_3,\dots ,\beta_{n+1})\in W.$$
If $\beta_1+\beta_2>0$, then
$$\lambda_{n+1}(z_1, \dots z_{n+1},\beta_1,\dots ,\beta_{n+1})=\lambda_{n}(y,z_3 \dots z_{n+1},\gamma,\beta_3,\dots ,\beta_{n+1})\in W,$$
where $y=\lambda(z_1,z_2, \frac{\beta_2}{\beta_1+\beta_2})\in U$ and $\gamma=\beta_1+\beta_2\in V_1+V_2\subseteq V$.\hfill$\Box$
\end{proof}

Let $(X,\lambda)$ be an equiconnected space and $A\subseteq X$ a nonempty set. Put $\lambda^0(A)=A$, $\lambda^n(A)=\lambda(\lambda^{n-1}(A)\times A\times [0,1])$ for $n\in\mathbb N$ and $\lambda^\infty(A)=\bigcup\limits_{n=1}^\infty \lambda^n(A)$. According to \cite{BanakhT} we call an equiconnected space $(X,\lambda)$ {\it locally convex} if for every $x\in X$ and a neighborhood $U$ of $x$ in $X$ there exists a neighborhood $V$ of $x$ such that $\lambda^\infty(V)\subseteq U$.

The following fact directly follows from the definitions.

\begin{proposition}\label{pr:2.2}
   Let$(X,\lambda)$ be an equiconnected space, $A\subseteq X$ and $n\in\mathbb N$. Then $$\lambda^n(A)=\{\lambda_n(x_1, \dots x_{n},\alpha_1,\dots,\alpha_{n}):(x_1, \dots x_{n})\in A^n, (\alpha_1,\dots ,\alpha_{n})\in S_n\}.$$
\end{proposition}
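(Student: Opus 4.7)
The proof goes by induction on $n$, matching the recursive structure of the definitions on both sides. Denote the right-hand side by $B_n$; the base case $n=1$ gives $B_1 = \{\lambda_1(x,1) : x \in A\} = A$, which equals $\lambda^0(A)$ (the stated equality is thus $\lambda^{n-1}(A) = B_n$, and I work under this natural indexing convention). For the inductive step it suffices to establish the set identity
$$B_{n+1} = \lambda(B_n \times A \times [0,1]),$$
since the recursive definition $\lambda^n(A) = \lambda(\lambda^{n-1}(A) \times A \times [0,1])$ and the induction hypothesis $\lambda^{n-1}(A) = B_n$ then immediately give $\lambda^n(A) = B_{n+1}$.

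The crux of the argument is a \emph{tail-factoring} identity for $\lambda_{n+1}$: whenever $s := \alpha_1+\cdots+\alpha_n > 0$,
$$\lambda_{n+1}(x_1,\dots,x_{n+1},\alpha_1,\dots,\alpha_{n+1}) = \lambda\bigl(\lambda_n(x_1,\dots,x_n,\tfrac{\alpha_1}{s},\dots,\tfrac{\alpha_n}{s}),\;x_{n+1},\;\alpha_{n+1}\bigr),$$
with the remaining case $\alpha_{n+1}=1$ collapsing to $\lambda_{n+1}(\ldots) = x_{n+1}$ by iterating Proposition~\ref{pr:2.1}(i). Given this identity, both inclusions of the set identity above are immediate: $\subseteq$ reads off the factorisation (using $A \subseteq B_n$ via Proposition~\ref{pr:2.1}(ii) to handle the boundary case $\alpha_{n+1}=1$), while $\supseteq$ packages a triple $(z,y,t)$ as the $\lambda_{n+1}$-combination with coefficients $(1-t)\alpha_1,\dots,(1-t)\alpha_n,t$, where $z = \lambda_n(x_1,\dots,x_n,\alpha_1,\dots,\alpha_n)$.

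The main obstacle, and the only non-routine part, is proving the tail-factoring identity itself, because the paper's recursion collapses the \emph{first} two coordinates of $\lambda_n$ rather than the last. This is handled by an auxiliary induction on $n$: one unfolds $\lambda_{n+1}$ using the paper's defining recursion (introducing the intermediate point $y = \lambda(x_1,x_2,\tfrac{\alpha_2}{\alpha_1+\alpha_2})$ and passing to a $\lambda_n$-term in $n$ arguments with first weight $\alpha_1+\alpha_2$), applies the inductive hypothesis (tail-factoring for $\lambda_n$, noting that the new tail is still $\alpha_{n+1}$ so the denominator $s$ is unchanged), and then recognises the resulting inner $\lambda_{n-1}$ as exactly the single step of the paper's recursion applied to $\lambda_n(x_1,\dots,x_n,\alpha_1/s,\dots,\alpha_n/s)$, because $\tfrac{\alpha_2/s}{(\alpha_1+\alpha_2)/s} = \tfrac{\alpha_2}{\alpha_1+\alpha_2}$ regenerates the same intermediate point $y$. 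Degenerate cases where $\alpha_1+\alpha_2=0$ reduce the arity via Proposition~\ref{pr:2.1}(i) and close by strong induction on $n$.
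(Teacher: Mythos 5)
Your proof is correct, and it does considerably more than the paper, which offers no argument at all for this proposition beyond the remark that it ``directly follows from the definitions.'' Two points deserve emphasis. First, you are right that the statement as printed is off by one: since $\lambda_1(x,1)=x$, the right-hand side for $n=1$ is just $A=\lambda^0(A)$, whereas $\lambda^1(A)=\lambda(A\times A\times[0,1])$ already contains all two-point combinations of elements of $A$; the correct identity is $\lambda^{n-1}(A)=B_n$ (equivalently $\lambda^n(A)=B_{n+1}$), which is what you prove, and which is all the paper actually needs --- in the proof of Theorem~\ref{th:4.1} the step $\lambda_k(z_1,\dots,z_k,\alpha_1,\dots,\alpha_k)\in\lambda^k(W_1)$ survives because $\lambda^{k-1}(W_1)\subseteq\lambda^k(W_1)\subseteq\lambda^\infty(W_1)$. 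Second, your tail-factoring identity $\lambda_{n+1}(x_1,\dots,x_{n+1},\alpha_1,\dots,\alpha_{n+1})=\lambda\bigl(\lambda_n(x_1,\dots,x_n,\alpha_1/s,\dots,\alpha_n/s),x_{n+1},\alpha_{n+1}\bigr)$ for $s=\alpha_1+\dots+\alpha_n>0$ is exactly the lemma that makes the proposition non-immediate: the recursion defining $\lambda_{n+1}$ collapses the \emph{first} two arguments, while the recursion defining $\lambda^n(A)$ appends a new point at the \emph{end}, so the two inductions do not line up without an argument of precisely this kind. Your verification of that identity --- unfolding one step of the defining recursion, applying the inductive hypothesis (legitimate because the leading block of coefficients still sums to $s$), and observing that $\tfrac{\alpha_2/s}{(\alpha_1+\alpha_2)/s}=\tfrac{\alpha_2}{\alpha_1+\alpha_2}$ regenerates the same intermediate point --- is sound, as is your handling of the degenerate cases $\alpha_1+\alpha_2=0$ (via Proposition~\ref{pr:2.1}(i)) and $\alpha_{n+1}=1$ (via Proposition~\ref{pr:2.1}(ii), which gives $A\subseteq B_n$). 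In short: the proof is complete, and it documents that the proposition is neither trivial nor, as literally stated, true.
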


\section{$\lambda$-sums and Baire measurable functions}\label{sec:2}

\begin{definition}\label{def:6}{\rm Let $(I,\leq)$ be a well ordered set, $(x_i)_{i\in I}$ a family of elements $x_i$ of an equiconnected space $(X,\lambda)$, and $(\alpha_i)_{i\in I}$ a collection of scalars $\alpha_i\geq 0$ such that:

$(1)$ $\{i\in I: \alpha_i\ne 0\}=\{i_k: 1\leq k\leq n\}$;

$(2)$ $i_1 < i_2 <\dots < i_n$;

$(3)$ $\alpha_{i_1}+\alpha_{i_2}+\dots +\alpha_{i_n}=1$.

\noindent Then the element $\lambda_n(x_{i_1},\dots , x_{i_n}, \alpha_{i_1},\dots ,\alpha_{i_n})$ is called  {\it a $\lambda$-sum of elements  $(x_i)_{i\in I}$ with coefficients $(\alpha_i)_{i\in I}$} and is denoted by  ${\sum\limits_{i\in I}}^\lambda \alpha_i x_i$.}\end{definition}

Let us observe that ${\sum\limits_{i\in I}}^\lambda \alpha_i x_i={\sum\limits_{i\in I}} \alpha_i x_i$ for an arbitrary topological vector space $X$, where $\lambda(x,y,t)=(1-t)x+t y$.

\begin{theorem}\label{th:2.3}
   Let $X$ and $Y$ be topological spaces, $(Z,\lambda)$ an equiconnected space, $(I,\leq)$ a well ordered set, let $0\le \alpha<\omega_1$, $(f_i)_{i\in I}$ a family of mappings {$f_i:Y\to Z$} of Baire class $\alpha$ and $(\varphi_i)_{i\in I}$ a locally finite partition of unity on $X$. Then a mapping $f:X\times Y\to Z$, $f(x,y)={\sum\limits_{i\in I}}^\lambda \varphi_i(x) f_i(y)$, is of the Baire class $\alpha$; in particular, if all mappings $f_i$ are continuous, then so is $f$.
\end{theorem}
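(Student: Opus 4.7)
The plan is to prove the result by transfinite induction on $\alpha$, with the ``in particular'' clause serving as the base case $\alpha=0$. The driving observation is that, because $(\varphi_i)_{i\in I}$ is locally finite, near any given $x_0\in X$ the $\lambda$-sum reduces to a single application of one of the finite-arity operations $\lambda_m$ from Proposition~\ref{pr:2.1}; and Proposition~\ref{pr:2.1}(i) guarantees that inserting or deleting zero coefficients does not change the value. These two facts together let us pass from the combinatorially awkward $I$-indexed $\lambda$-sum to a genuinely continuous expression on a neighbourhood of each point.

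For the base case, fix $(x_0,y_0)\in X\times Y$ and choose a neighbourhood $U$ of $x_0$ on which only finitely many $\varphi_i$ are not identically zero, say for $i\in\{j_1<\dots<j_m\}$. For every $x\in U$ the remaining $\varphi_i(x)$ vanish, so iterated use of Proposition~\ref{pr:2.1}(i) yields
\[
f(x,y)=\lambda_m\bigl(f_{j_1}(y),\dots,f_{j_m}(y),\,\varphi_{j_1}(x),\dots,\varphi_{j_m}(x)\bigr),
\]
where any $\varphi_{j_k}(x)$ that happens to vanish at a specific $x$ may be reinstated using (i) without altering the value. Since $\lambda_m$ is continuous by Proposition~\ref{pr:2.1}(iii) and each $f_{j_k}$ and $\varphi_{j_k}$ is continuous, $f$ is continuous on $U\times Y$. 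This settles $\alpha=0$.

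For the inductive step assume the theorem holds for every $\xi<\alpha$ and let $f_i\in B_\alpha(Y,Z)$. Fix once and for all a sequence $(\alpha_n)$ with $\alpha_n<\alpha$ and $\lim_n(\alpha_n+1)=\alpha$, and, invoking the remark following the definition of Baire classes, choose for each $i\in I$ mappings $f_{i,n}\in B_{\alpha_n}(Y,Z)$ with $f_{i,n}(y)\to f_i(y)$ pointwise on $Y$. Put
\[
g_n(x,y)={\sum_{i\in I}}^{\lambda}\varphi_i(x)\,f_{i,n}(y).
\]
Applied to the family $(f_{i,n})_{i\in I}$ of Baire class $\alpha_n$, the inductive hypothesis gives $g_n\in B_{\alpha_n}(X\times Y,Z)$.

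Pointwise convergence $g_n\to f$ is then local: for any $(x,y)$ choose the neighbourhood of $x$ with relevant indices $j_1<\dots<j_m$ as above; both $f(x,y)$ and $g_n(x,y)$ are $\lambda_m$ applied to the same coefficients $\varphi_{j_k}(x)$ and to values $f_{j_k}(y)$, resp.\ $f_{j_k,n}(y)$, and continuity of $\lambda_m$ together with $f_{j_k,n}(y)\to f_{j_k}(y)$ for $k=1,\dots,m$ delivers the conclusion. Hence $f\in B_\alpha(X\times Y,Z)$. The step I expect to require the most care is the uniform approximation: a \emph{single} sequence $(\alpha_n)$ must serve every index $i\in I$ simultaneously in order to make $g_n$ sit in one Baire class $B_{\alpha_n}$, and the bookkeeping between the well-ordered indexing of the $\lambda$-sum and the local finite reduction must be handled cleanly via Proposition~\ref{pr:2.1}(i).
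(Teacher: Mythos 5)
Your proof is correct and follows essentially the same route as the paper's: transfinite induction on $\alpha$, with the base case handled by using local finiteness to reduce the $\lambda$-sum on a neighbourhood of $x_0$ to a single continuous $\lambda_m$-expression via Proposition~\ref{pr:2.1}(i), and the inductive step via a single sequence $(\alpha_n)$ with $\lim_n(\alpha_n+1)=\alpha$ and approximating families $f_{i,n}\in B_{\alpha_n}(Y,Z)$. Your convergence argument for $g_n\to f$ is just a slightly more explicit version of the paper's appeal to the continuity of $\lambda_m$.
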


\begin{proof} We will argue by induction in $\alpha$. We first consider the case $\alpha=0$, i.e. the case when all mappings $f_i$ are continuous.
Fix a point $x_0\in X$ and show that $f$ is continuous at every point of $\{x_0\}\times Y$.

Using locally finiteness of partition of unity $(\varphi_i)_{i\in I}$, we choose an open neighborhood $U$ of $x_0$ in $X$ such that $I_0=\{i\in I: {\rm supp}\,\varphi_n \cap U \ne\O\}$ is finite. Let $I_0=\{i_k:1\leq k\leq n\}$ and  $i_1 < i_2 <\dots < i_n$. The definition of $\lambda$-sum and Proposition~\ref{pr:2.1} $(i)$ imply
$$f(x,y)=\lambda_n(f_{i_1}(y),\dots , f_{i_n}(y), \varphi_{i_1}(x),\dots ,\varphi_{i_n}(x))$$
for any $x\in U$ and $y\in Y$. Since mappings $\lambda_n$, $f_i$ and $\varphi_i$ are continuous, $f$ is continuous on $U\times Y$; in particular, $f$ is continuous at every point of $\{x_0\}\times Y$. Hence, the theorem is true for $\alpha=0$.

Now suppose that the theorem is true for all ordinals  $\alpha<\beta$ with some fixed ordinal $0<\beta<\omega_1$. Let $(f_i)_{i\in I}$ be a family of mappings $f_i\in B_\beta(Y,Z)$. Choose an increasing sequence $(\alpha_n)_{n=1}^\infty$ of ordinals $\alpha_n<\beta$ such that  $\lim\limits_{n\to\infty}(\alpha_n+1)=\beta$ and a sequence of families $(f_{i,n})_{i\in I}$ of mappings $f_{i,n}\in B_{\alpha_n}(Y,Z)$ such that $f_{i,n}(y)\to f_i(y)$ for all $y\in Y$ and $i\in I$. By the assumption, the mapping $g_n:X\times Y\to Z$, $g_n(x,y)={\sum\limits_{i\in I}}^\lambda \varphi_i(x) f_{i,n}(y)$, is of the class $\alpha_n$ for every  $n\in\mathbb N$. Moreover, the continuity of $\lambda_n$ implies  $g_n(x,y)\to f(x,y)$ for all $x\in X$ and $y\in Y$.\hfill$\Box$

\end{proof}

\section{Strong PP-spaces and mappings into locally convex spaces}\label{sec:PP}

\begin{definition}{\rm
A topological space $(X,{\mathcal T})$ is called {\it quarter-stratifiable} if there exists a function $g:{\mathbb N}\times X\to\mathcal T$ (called {\it quarter-stratifying function}) such that
\begin{enumerate}
  \item $X=\bigcup\limits_{x\in X}g(n,x)$ for every $n\in\mathbb N$;

  \item ($x\in g(n,x_n)$, $n\in\mathbb N$) $\Rightarrow$ ($x_n\to x$).
\end{enumerate}

A topology ${\mathcal T}'$ on a topological space $(X,{\mathcal T})$ is called  {\it quarter-stratifying} if there exists a quarter-stratifying function $g:{\mathbb N}\times X\to\mathcal T$ for $X$ such that $g({\mathbb  N}\times X)\subseteq {\mathcal T}'$.

A topological space $X$ is said to be {\it metrically quarter-stratifiable} if it admits a weaker metrizable quarter-stratifying topology.
}
\end{definition}

The topology on a topological space $X$ we denote by $\mathcal T(X)$.

\begin{definition}\label{def:2} {\rm
  A topological space $X$ is called {\it (strong) PP-space} if (for any dense set $D\subseteq X$) there exist a sequence $({\mathcal U}_n)_{n=1}^\infty$ of $((\varphi_{i,n}:i\in
  I_n))_{n=1}^\infty$ of locally finite partitions of unity on $X$  and a sequence $((x_{i,n}:i\in I_n))_{n=1}^\infty$ of families of points of $X$ (of $D$) such that
      \begin{equation*}
       (\forall x\in X) (\forall U\ni x, U\in{\mathcal T}(X)) (\exists n_0\in\mathbb N)(\forall n\ge n_0) (\forall i\in I_n)
       \end{equation*}
       \begin{equation}\label{eq:3}
    (x\in {\rm supp}\varphi_{i,n} \Longrightarrow x_{i,n}\in U)
    \end{equation}
    }
\end{definition}
    Obviously, every strong PP-space is a PP-space.

\begin{remark}{\rm $\,$
  \begin{enumerate}
    \item Every metrically quarter-stratifiable space is a PP-space according to \cite[Remark 3.1]{M}.

    \item Every Hausdorff PP-space is metrically quarter stratifiable by \cite[Proposition 3.2]{M}.
  \end{enumerate}  }
\end{remark}

It is easy to see that any metrizable space, or, more general, every space equipped with a topology generated by a pseudo-metric, is strong PP-space. Therefore, a topological space $X$ equipped with the topology generated by a pseudo-metric which is not a metric on $X$, is a PP-space, but is not metrically quarter-stratifiable.

The following proposition gives an example of a non-metrizable strong PP-space.

\begin{proposition}\label{th:5.1} The Sorgenfrey line $\mathbb L$ is a strong PP-space. \end{proposition}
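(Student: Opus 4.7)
The plan is to exploit the fact that in the Sorgenfrey topology each half-open interval $[a,b)$ is clopen, so the characteristic functions of a sufficiently fine tiling of $\mathbb R$ by such intervals form a continuous, locally finite partition of unity on $\mathbb L$. The tile endpoints will be drawn from the given dense set $D$, and the approximating point associated with each tile will be the \emph{right} endpoint of that tile; the asymmetry of the Sorgenfrey topology forces this particular choice.

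Given a dense $D\subseteq\mathbb L$ and $n\in\mathbb N$, the first step is to use the density of $D$ to select a strictly increasing bi-infinite sequence $(a_{k,n})_{k\in\mathbb Z}\subseteq D$ with $a_{k+1,n}-a_{k,n}<1/n$ and $a_{k,n}\to\pm\infty$ as $k\to\pm\infty$ (for instance, by choosing $a_{k+1,n}\in D\cap[a_{k,n}+\tfrac{1}{2n},a_{k,n}+\tfrac{1}{n})$ inductively for $k\ge 0$ and symmetrically for $k<0$). Set $I_n=\mathbb Z$, let $\varphi_{k,n}$ be the characteristic function of $[a_{k,n},a_{k+1,n})$, and put $x_{k,n}=a_{k+1,n}\in D$.

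It is then routine to verify that $(\varphi_{k,n})_{k\in\mathbb Z}$ is a continuous, locally finite partition of unity on $\mathbb L$ for each $n$: each tile is clopen in $\mathbb L$, so $\varphi_{k,n}\in C(\mathbb L,\mathbb R)$; the tiles partition $\mathbb R$, so $\sum_k\varphi_{k,n}\equiv 1$; and for $x\in[a_{k,n},a_{k+1,n})$ the neighborhood $[x,a_{k+1,n})$ meets only this one tile. Moreover ${\rm supp}\,\varphi_{k,n}=[a_{k,n},a_{k+1,n})$, since a clopen set coincides with its closure. To verify condition \eqref{eq:3}, fix $x\in\mathbb L$ and a neighborhood $U$ of $x$, choose $\varepsilon>0$ with $[x,x+\varepsilon)\subseteq U$, and take any $n_0>1/\varepsilon$. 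For $n\ge n_0$, $x$ lies in exactly one tile $[a_{k,n},a_{k+1,n})$, and
$$x\;<\;x_{k,n}\;=\;a_{k+1,n}\;<\;a_{k,n}+\tfrac{1}{n}\;\le\;x+\tfrac{1}{n}\;<\;x+\varepsilon,$$
so $x_{k,n}\in[x,x+\varepsilon)\subseteq U$, as required.

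The only genuine subtlety --- the main obstacle, if one wants to call it that --- is the asymmetry of Sorgenfrey neighborhoods: since a typical $U$ has the form $[x,x+\varepsilon)$, we need $x_{k,n}\ge x$ for \emph{every} $x\in{\rm supp}\,\varphi_{k,n}$, which rules out picking $x_{k,n}$ at the left endpoint or anywhere in the interior of the tile. One is forced to take $x_{k,n}=a_{k+1,n}$, which lies \emph{outside} ${\rm supp}\,\varphi_{k,n}$; this is perfectly admissible because Definition~\ref{def:2} only requires the approximating points to belong to $D$, not to the corresponding supports.
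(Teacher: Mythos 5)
Your proof is correct and follows essentially the same route as the paper's: a partition of $\mathbb L$ into clopen half-open tiles of width about $1/n$, characteristic functions as the locally finite partition of unity, and the approximating point placed just to the right of each tile so that it falls into every Sorgenfrey neighborhood $[x,x+\varepsilon)$ of every point of that tile. The only cosmetic difference is that the paper keeps the fixed uniform tiling $[\tfrac{i-1}{n},\tfrac{i}{n})$ and picks $x_{i,n}$ anywhere in $[\tfrac{i}{n},\tfrac{i+1}{n})\cap D$, while you adapt the tile endpoints to lie in $D$ and take the right endpoint itself.
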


\begin{proof} Let $D\subseteq \mathbb L$ be a dense set. For any $n\in\mathbb N$ and $i\in \mathbb Z$ denote by  $\varphi_{i,n}$ the characteristic function of $[\frac{i-1}{n},\frac{i}{n})$ and choose a point $x_{i,n}\in [\frac{i}{n},\frac{i+1}{n})\cap D$. Then the sequences  $\bigl((\varphi_{i,n}:i\in I_n) \bigr)_{n=1}^\infty$ and $\bigl((x_{i,n}:i\in I_n) \bigr)_{n=1}^\infty$ satisfy (\ref{eq:3}).\hfill$\Box$

\end{proof}

 Examples of PP-spaces which are not strong PP-spaces can be found in Section~\ref{sec:5} (see Examples~\ref{ex:1} and \ref{ex:2}).

\begin{theorem}\label{th:4.1} Let $X$ be a strong PP-space, $Y$ a topological space, $(Z,\lambda)$ a locally convex equiconnected space and  $0\le\alpha<\omega_1$. Then $(X,Y,Z)$ is a Rudin  $\alpha$-triple. \end{theorem}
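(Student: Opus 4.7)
The plan is to produce, given $f \in C\overline{B}_\alpha(X\times Y,Z)$, an explicit sequence of mappings $g_n \in B_\alpha(X\times Y,Z)$ converging pointwise to $f$, which directly yields $f \in B_{\alpha+1}(X\times Y,Z)$.

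First I would set $D=\{x\in X : f^x\in B_\alpha(Y,Z)\}$, which is dense in $X$ by hypothesis. Since $X$ is a strong PP-space, I can apply Definition~\ref{def:2} to this dense set $D$ to obtain a sequence of locally finite partitions of unity $\bigl((\varphi_{i,n}:i\in I_n)\bigr)_{n=1}^\infty$ on $X$ and families of points $\bigl((x_{i,n}:i\in I_n)\bigr)_{n=1}^\infty$ in $D$ satisfying condition (\ref{eq:3}). Then I define
\[
g_n(x,y)={\sum_{i\in I_n}}^\lambda \varphi_{i,n}(x)\,f(x_{i,n},y),\qquad (x,y)\in X\times Y.
\]
Because each $x_{i,n}\in D$, the partial mapping $y\mapsto f(x_{i,n},y)$ lies in $B_\alpha(Y,Z)$, so Theorem~\ref{th:2.3} gives $g_n\in B_\alpha(X\times Y,Z)$.

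The heart of the proof is to verify $g_n(x,y)\to f(x,y)$ for every $(x,y)\in X\times Y$. Fix such a point and an open neighborhood $W$ of $f(x,y)$ in $Z$. Since $(Z,\lambda)$ is locally convex, choose a neighborhood $W'$ of $f(x,y)$ with $\lambda^\infty(W')\subseteq W$. By continuity of $f^y$ at $x$ there is an open neighborhood $U$ of $x$ with $f(U\times\{y\})\subseteq W'$. By (\ref{eq:3}), there exists $n_0$ such that for every $n\ge n_0$ and every $i\in I_n$, the implication $x\in\operatorname{supp}\varphi_{i,n}\Rightarrow x_{i,n}\in U$ holds. Hence for $n\ge n_0$, the only indices contributing nonzero coefficients to the $\lambda$-sum defining $g_n(x,y)$ are those with $x_{i,n}\in U$, so the values $f(x_{i,n},y)$ in that sum all lie in $W'$. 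Applying Proposition~\ref{pr:2.2} and Proposition~\ref{pr:2.1}\,(i) (to discard the zero-coefficient terms) gives $g_n(x,y)\in \lambda^k(W')\subseteq\lambda^\infty(W')\subseteq W$, which establishes the convergence and finishes the argument.

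The main obstacle is precisely the step controlling the $\lambda$-sum: in an arbitrary equiconnected space one cannot propagate ``close to $f(x,y)$'' through iterated applications of $\lambda$, so without the local convexity assumption on $Z$ the inclusion $g_n(x,y)\in W$ fails. This is exactly why local convexity enters the hypotheses of this theorem (and why the more general non-locally-convex case will require the extra dimension-theoretic assumptions handled in Section~\ref{sec:3}). Otherwise, the passage from convergence of the $g_n$'s to $f\in B_{\alpha+1}(X\times Y,Z)$ is immediate from the definition of the Baire classes.
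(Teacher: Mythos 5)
Your proposal is correct and follows essentially the same route as the paper's proof: the same $\lambda$-sum construction $g_n(x,y)={\sum_{i}}^\lambda\varphi_{i,n}(x)f(x_{i,n},y)$ with points taken from the dense set $X_{B_\alpha}(f)$, membership in $B_\alpha$ via Theorem~\ref{th:2.3}, and pointwise convergence via local convexity of $(Z,\lambda)$ together with condition~(\ref{eq:3}) and continuity in the first variable. The only quibble is notational: in the paper's conventions the map you use for continuity in the first variable is $f_y$, not $f^y$.
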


\begin{proof} Let $f\in C\overline{B}_\alpha (X\times Y, Z)$. Since $X_{B_\alpha}(f)$ is dense in $X$, we can choose a sequence
$((\varphi_{i,n}:i\in I_n))_{n=1}^\infty$ of locally finite partitions of unity on  $X$ and a sequence $((x_{i,n}:i\in I_n))_{n=1}^\infty$ of families of points of
$X_{B_\alpha}(f)$ which satisfy~(\ref{eq:3}).

For every $n\in {\mathbb N}$ we well order the set $I_n$ and for all $(x,y)\in X\times Y$ we set $$ f_n(x,y)={\sum\limits_{i\in I_n}}^\lambda\varphi_{i,n}(x)f(x_{i,n},y). $$ Then $f^{x_{i,n}}\in B_\alpha(Y,Z)$ for every pair $(i,n)\in I_n \times {\mathbb N}$. Therefore, applying Theorem~\ref{th:2.3}, we have that $f_n\in {B}_\alpha
(X\times Y, Z)$ for every $n\in\mathbb N$.

We will show that $f_n(x,y)\to f(x,y)$ on $X\times Y$. Fix $(x_0,y_0)\in X\times Y$ and consider an arbitrary neighborhood $W$ of $z_0=f(x_0,y_0)$ in $Z$. Since $Z$ is locally convex, there is a neighborhood $W_1$ of $z_0$ such that $\lambda^\infty(W_1)\subseteq W$. The continuity of $f$ in the first variable at $(x_0,y_0)$  implies the existence of a neighborhood $U$ of  $x_0$ in $X$ such that $f(x,y_0)\in W_1$ for every  $x\in U$. Now, using~(\ref{eq:3}), we choose a number $n_0$ such that $x_{i,n}\in U$ for all $n\geq n_0$  and $i\in I_n$  with $x_0\in {\rm supp}\varphi_{i,n}$.

Now we prove that $f_n(x_0,y_0)\in W$ for all $n\geq n_0$. Fix $n\geq n_0$ and set $$I=\{i\in I_n:x_0\in {\rm supp}\varphi_{i,n}\}.$$
Let $I=\{i_1, i_2,\dots i_k\}$, where $i_1< i_2<\dots < i_k$. Denote $\alpha_1=\varphi_{i_1,n}(x_0)$,$\dots$,$\alpha_k=\varphi_{i_k,n}(x_0)$, $z_1=f(x_{i_1,n},y_0),\dots , z_k=f(x_{i_k,n},y_0)$. Then $z_1,\dots z_k \in W_1$ and $$f_n(x_0,y_0)=\lambda_k(z_1,\dots , z_k,\alpha_1,\dots \alpha_k)\in \lambda^k(W_1)\subseteq W.$$\hfill$\Box$
\end{proof}

\begin{corollary}\label{cor:4.4} Let $X$ be a metrizable space, $Y$ a topological space, $(Z,\lambda)$ a locally convex equiconnected space. Then  $(X,Y,Z)$ is a Rudin $\alpha$-triple for every $0\le \alpha<\omega_1$. \end{corollary}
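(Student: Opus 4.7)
The corollary should fall out immediately once we verify that every metrizable space is a strong PP-space, at which point Theorem~\ref{th:4.1} delivers the conclusion. So the plan reduces to carrying out the construction of the sequences $\bigl((\varphi_{i,n}:i\in I_n)\bigr)_{n=1}^\infty$ and $\bigl((x_{i,n}:i\in I_n)\bigr)_{n=1}^\infty$ witnessing Definition~\ref{def:2} for an arbitrary dense set $D\subseteq X$.

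Fix a compatible metric $d$ on $X$. For each $n\in\mathbb{N}$, consider the open cover $\{B(x,1/n):x\in X\}$. By the Stone theorem, $X$ is paracompact, so this cover has a locally finite open refinement $\mathcal{U}_n=\{U_{i,n}:i\in I_n\}$; in particular every $U_{i,n}$ has diameter at most $2/n$. Choose a locally finite partition of unity $(\varphi_{i,n})_{i\in I_n}$ subordinate to $\mathcal{U}_n$, so that $\operatorname{supp}\varphi_{i,n}\subseteq\overline{U_{i,n}}$, and, using density of $D$, pick any $x_{i,n}\in U_{i,n}\cap D$. Then every point of $\operatorname{supp}\varphi_{i,n}$ is within distance $2/n$ of $x_{i,n}$.

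To verify (\ref{eq:3}), fix $x\in X$ and an open neighborhood $U\ni x$. Choose $\varepsilon>0$ with $B(x,\varepsilon)\subseteq U$ and take any $n_0>2/\varepsilon$. For $n\ge n_0$ and $i\in I_n$ with $x\in\operatorname{supp}\varphi_{i,n}$, the inequality $d(x,x_{i,n})\le 2/n\le 2/n_0<\varepsilon$ gives $x_{i,n}\in U$, as required. Hence $X$ is a strong PP-space. Since $(Z,\lambda)$ is locally convex equiconnected and $\alpha<\omega_1$ is arbitrary, Theorem~\ref{th:4.1} applies and yields that $(X,Y,Z)$ is a Rudin $\alpha$-triple.

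The only real step is this strong-PP verification; everything else is a direct citation of Theorem~\ref{th:4.1}. I do not anticipate a genuine obstacle, only the mild bookkeeping of arranging that the support of each $\varphi_{i,n}$ lies close to the chosen sample point $x_{i,n}\in D$, which the metric together with paracompactness handles cleanly.
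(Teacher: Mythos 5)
Your proof is correct and follows exactly the route the paper intends: Corollary~\ref{cor:4.4} is stated as an immediate consequence of Theorem~\ref{th:4.1} together with the paper's (unproved) remark that every metrizable space is a strong PP-space, and your metric/paracompactness argument is a sound verification of that remark. The only difference is that you supply the details of the strong-PP check, which the paper dismisses as "easy to see."
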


\section{Mappings into equiconnected spaces}\label{sec:3}

\begin{definition}\label{def:3} {\rm
  A sequence $(\mathcal A_n)_{n=1}^{\infty}$ of families $\mathcal A_n=(A_{i,n}:i\in I_n)$ of subsets $A_{i,n}$ of a topological space  $X$ is called {\it uniformly pointwise finite in $X$} if for every $x\in X$ there exists $k_x\in \mathbb N$ such that for every $n\in \mathbb N$ the set  $\{i\in I_n: x\in A_{i,n}\}$ contains at most $k_x$ elements.}
\end{definition}

\begin{theorem}\label{th:4.2}
  Let $X$ and $Y$ be topological spaces, $(Z,\lambda)$ an equiconnected space, $f:X\times Y\to Z$ a continuous mapping in the first variable, $0\le\alpha<\omega_1$; let $((\varphi_{i,n}:i\in
  I_n))_{n=1}^\infty$ be a sequence of locally finite partitions of unity on $X$ and $((x_{i,n}:i\in I_n))_{n=1}^\infty$ a sequence of families of points $x_{i,n}\in X_{B_\alpha}(f)$ such that (\ref{eq:3}) holds, and let the sequence $(\mathcal A_n)_{n=1}^{\infty}$ of families $\mathcal A_n=({\rm supp}\varphi_{i,n}:i\in I_n)$  be uniformly pointwise finite in $X$. Then \mbox{$f\in B_{\alpha+1}(X\times Y,Z)$.}
\end{theorem}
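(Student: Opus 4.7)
The plan is to mimic the proof of Theorem~\ref{th:4.1}, replacing the local convexity argument (which supplies $\lambda^\infty(W_1)\subseteq W$) by a compactness argument built on the uniform pointwise finiteness of the family of supports. After well-ordering each index set $I_n$, define the approximants
$$f_n(x,y)={\sum_{i\in I_n}}^{\lambda}\varphi_{i,n}(x)\,f(x_{i,n},y).$$
Since $x_{i,n}\in X_{B_\alpha}(f)$, each section $f^{x_{i,n}}=f(x_{i,n},\cdot)$ lies in $B_\alpha(Y,Z)$, so Theorem~\ref{th:2.3} gives $f_n\in B_\alpha(X\times Y,Z)$. Therefore it suffices to show $f_n(x,y)\to f(x,y)$ pointwise on $X\times Y$; the membership $f\in B_{\alpha+1}(X\times Y,Z)$ then follows from the definition of Baire classes.

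Fix $(x_0,y_0)\in X\times Y$, put $z_0=f(x_0,y_0)$, and let $W$ be an open neighborhood of $z_0$. The uniform pointwise finiteness assumption supplies a bound $k=k_{x_0}\in\mathbb{N}$ such that, for every $n$, the set $\{i\in I_n:x_0\in{\rm supp}\,\varphi_{i,n}\}$ has at most $k$ elements; in particular at most $k$ of the coefficients $\varphi_{i,n}(x_0)$ are positive. The crucial step is to produce a single neighborhood $V$ of $z_0$ that controls $\lambda$-combinations of \emph{every} length up to $k$: for each $j\in\{1,\dots,k\}$, Proposition~\ref{pr:2.1}(ii) yields $\lambda_j(z_0,\dots,z_0,\alpha_1,\dots,\alpha_j)=z_0\in W$ for every $(\alpha_1,\dots,\alpha_j)\in S_j$, so the open set $\lambda_j^{-1}(W)$ contains the compact fiber $\{z_0\}^j\times S_j$. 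By the tube lemma (using compactness of $S_j$) there is an open $V_j\ni z_0$ with $\lambda_j(V_j^{\,j}\times S_j)\subseteq W$. Set $V=V_1\cap\dots\cap V_k$.

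Now continuity of $f$ in the first variable at $(x_0,y_0)$ gives an open neighborhood $U$ of $x_0$ with $f(x,y_0)\in V$ for all $x\in U$, and condition~(\ref{eq:3}) supplies $n_0$ such that for $n\ge n_0$ every $i\in I_n$ with $x_0\in{\rm supp}\,\varphi_{i,n}$ satisfies $x_{i,n}\in U$, hence $f(x_{i,n},y_0)\in V$. For such $n$, enumerating the finitely many active indices as $i_1<\dots<i_j$ (with $j\le k$) and collapsing the zero coefficients via Proposition~\ref{pr:2.1}(i), one gets
$$f_n(x_0,y_0)=\lambda_j\bigl(f(x_{i_1,n},y_0),\dots,f(x_{i_j,n},y_0),\varphi_{i_1,n}(x_0),\dots,\varphi_{i_j,n}(x_0)\bigr)\in W,$$
by the defining property of $V$. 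This gives $f_n(x_0,y_0)\to z_0$ and completes the argument.

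The main obstacle is the uniform-in-arity choice of $V$: without local convexity of $(Z,\lambda)$ one cannot iterate $\lambda$ freely and consequently must exhibit one neighborhood handling every combination-length simultaneously. The uniform pointwise finiteness hypothesis is exactly what makes the global bound $k_{x_0}$ available, turning this into a finite intersection of tube-lemma neighborhoods; everything else is bookkeeping with Proposition~\ref{pr:2.1}.
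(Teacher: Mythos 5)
Your proposal is correct and follows essentially the same route as the paper: define the same $\lambda$-sum approximants, invoke Theorem~\ref{th:2.3}, and use the bound $k_{x_0}$ together with compactness of the simplex and continuity of $\lambda_j$ at $(z_0,\dots,z_0)$ to replace local convexity. The only cosmetic difference is that the paper applies the compactness argument once at the maximal arity $m=k_{x_0}$ and then pads shorter combinations with copies of $z_0$ and zero coefficients via Proposition~\ref{pr:2.1}(i), whereas you intersect tube-lemma neighborhoods over all arities $j\le k$; both handle the varying length of the active index set equally well.
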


\begin{proof}
For every $n\in {\mathbb N}$ we well order the set  $I_n$ and for all \mbox{$(x,y)\in X\times Y$} let $$ f_n(x,y)={\sum\limits_{i\in I_n}}^\lambda\varphi_{i,n}(x)f(x_{i,n},y). $$ Analogously as in the proof of Theorem~\ref{th:4.1} we have $f_n\in {B}_\alpha
(X\times Y, Z)$ for every $n\in\mathbb N$.

It remains to prove that  $f_n(x,y)\to f(x,y)$ on $X\times Y$. Fix $(x_0,y_0)\in X\times Y$ and  consider a neighborhood $W$ of $z_0=f(x_0,y_0)$ in  $Z$. Denote by $m$ the number $k_{x_0}$ from Definition~\ref{def:3}. Since $\lambda_{m}$ is continuous and $\lambda_m(z_0,\dots, z_0,\alpha_1,\dots, \alpha_m)=z_0$ for all $(\alpha_1,\dots, \alpha_m)\in S_m$ and $S_m$ is compact, there exists a neighborhood $W_1$ of $z_0$ in $Z$ such that
$\lambda_m(z_1,\dots, z_m,\alpha_1,\dots, \alpha_m)\in W$ for all $z_1,\dots,z_m\in W_1$ and $(\alpha_1,\dots, \alpha_m)\in S_m$. The continuity of $f$ in the first variable at $(x_0,y_0)$ implies the existence of a neighborhood $U$ of $x_0$ in $X$ such that $f(x,y_0)\in W_1$ for every $x\in U$. According to~(\ref{eq:3}) there is a number $n_0$ such that  $x_{i,n}\in U$ for all  $n\geq n_0$ and $i\in I_n$ with $x_0\in {\rm supp}\varphi_{i,n}$.

We will show that $f_n(x_0,y_0)\in W$ for all $n\geq n_0$. Take $n\geq n_0$ and set $$I=\{i\in I_n:x_0\in {\rm supp}\varphi_{i,n}\}.$$
Let $I=\{i_1, i_2,\dots i_k\}$, where $i_1< i_2<\dots < i_k$ and $k\leq m$. Denote $\alpha_1=\varphi_{i_1,n}(x_0),\dots,\alpha_k=\varphi_{i_k,n}(x_0)$, $z_1=f(x_{i_1,n},y_0),\dots , z_k=f(x_{i_k,n},y_0)$. Then  $z_1,\dots z_k \in W_1$. Taking into account Proposition~\ref{pr:2.1} $(i)$, we have $$f_n(x_0,y_0)=\lambda_k(z_1,\dots , z_k,\alpha_1,\dots \alpha_k)=$$$$= \lambda_m(z_1,\dots , z_k,z_0,\dots,z_0,\alpha_1,\dots \alpha_k,0,\dots, 0)\in W.$$\hfill$\Box$
\end{proof}

A topological space $X$ is {\it strongly countably dimensional} if there exists a sequence $(X_n)_{n=1}^{\infty}$ of sets $X_n\subseteq X$ such that $X=\bigcup\limits_{n=1}^{\infty}X_n$ and ${\rm dim} X_n< n$ for every $n\in\mathbb N$, where by ${\rm dim\,}Y$ we denote \v{C}ech-Lebesgue dimension of $Y$ (see \cite[p.~564]{Eng}).

\begin{theorem}\label{th:4.3} Let $X$ be a Hausdorff paracompact strongly countably dimensional strong PP-space, $Y$ a topological space, $(Z,\lambda)$ an equiconnected space and  $0\le\alpha<\omega_1$. Then $(X,Y,Z)$ is a Rudin $\alpha$-triple. \end{theorem}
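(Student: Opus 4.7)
The plan is to reduce the statement to an application of Theorem~\ref{th:4.2}. Fix $f\in C\overline{B}_\alpha(X\times Y,Z)$, so that $D:=X_{B_\alpha}(f)$ is dense in $X$. Since $X$ is a strong PP-space, Definition~\ref{def:2} applied to $D$ produces sequences of locally finite partitions of unity $((\varphi_{i,n})_{i\in I_n})_{n=1}^{\infty}$ on $X$ and of families of points $((x_{i,n})_{i\in I_n})_{n=1}^{\infty}$ in $D$ satisfying~(\ref{eq:3}). It would remain to arrange that, in addition, the family of supports $(({\rm supp}\,\varphi_{i,n})_{i\in I_n})_{n=1}^{\infty}$ is uniformly pointwise finite on $X$ in the sense of Definition~\ref{def:3}; Theorem~\ref{th:4.2} would then yield $f\in B_{\alpha+1}(X\times Y,Z)$.

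To arrange uniform pointwise finiteness I would exploit the strong countable dimensionality of $X$. Write $X=\bigcup_{k=1}^{\infty} X_k$ with ${\rm dim}\,X_k<k$. The main tool is the classical refinement theorem for covering dimension in paracompact Hausdorff spaces: every open cover of a subspace of covering dimension $<k$ admits an open refinement of order at most $k$. Using paracompactness, I would choose a locally finite open swelling $(G_k)_{k=1}^{\infty}$ of $(X_k)_{k=1}^{\infty}$ together with a subordinate partition of unity $(\rho_k)_{k=1}^{\infty}$. For each $n$, I would refine the cover $(\{x:\varphi_{i,n}(x)>0\})_{i\in I_n}$ on each $G_k$ so that its order at every point of $X_k$ is at most $k$, and then combine these refinements, weighted by $(\rho_k)$, into a new locally finite partition of unity $((\widetilde\varphi_{j,n})_{j\in J_n})_{n=1}^{\infty}$. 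Each $\widetilde\varphi_{j,n}$ would be supported inside some ${\rm supp}\,\varphi_{i,n}$, so the associated points $\widetilde x_{j,n}:=x_{i,n}\in D$ preserve~(\ref{eq:3}). Local finiteness of $(G_k)$ means each $x$ lies in only finitely many $G_k$, and an elementary count then produces a uniform pointwise bound $k_x$ on the orders that is independent of $n$.

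The main obstacle lies in the refinement step: the dimension-theoretic refinements must be performed coherently across all $n\in\mathbb N$ while keeping the new supports small enough for each $\widetilde x_{j,n}$ to remain close, in the sense of~(\ref{eq:3}), to every point of its piece. One also has to verify that the covering-dimension refinement theorem is available in our setting, which is delicate because strong countable dimensionality does not automatically make the $X_k$ paracompact on their own; however, this can be finessed because the refinements only need to cover $X_k$ by open sets of the ambient space $X$. Once the refined partitions $(\widetilde\varphi_{j,n},\widetilde x_{j,n})$ are in place, the hypotheses of Theorem~\ref{th:4.2} are satisfied, and the theorem follows.
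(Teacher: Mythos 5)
Your overall strategy is the same as the paper's: reduce to Theorem~\ref{th:4.2} by keeping the points supplied by the strong PP-property and replacing each partition of unity by a refinement whose supports are uniformly pointwise finite, the new points being inherited from the old so that~(\ref{eq:3}) survives. Where the paper and you part ways is in how that refinement is produced. The paper gets it in one stroke from \cite[Theorem~5.1.10]{Eng1}: for each $n$ the cover $({\rm supp}\,\varphi_{i,n}:i\in I_n)$ admits a locally finite open refinement $(U_{j,n}:j\in J_n)$ such that every $x\in X_m$ has a neighborhood meeting at most $m$ of the $U_{j,n}$; since every $x$ lies in some $X_m$, the bound $k_x=m$ is independent of $n$, which is exactly Definition~\ref{def:3}. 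You instead try to manufacture such a refinement by hand, and that construction, as sketched, has genuine gaps.

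Concretely, two steps fail. First, a \emph{locally finite} open swelling $(G_k)_{k=1}^{\infty}$ of $(X_k)_{k=1}^{\infty}$ need not exist: the $X_k$ are arbitrary subsets with ${\rm dim}\,X_k<k$ and may all be dense (e.g.\ decompose $\mathbb R$ into countably many dense zero-dimensional pieces), and a locally finite family of dense open sets is necessarily finite. Second, even granting the $G_k$ and the per-piece refinements, the final count does not close: the order of the $k$-th refinement is controlled (uniformly in $n$) only at points of $X_k$, whereas a given $x$ typically lies in $G_k\setminus X_k$ for some of the finitely many $k$ with $x\in{\rm supp}\,\rho_k$, and at such $x$ the $k$-th refinement of the $n$-th cover contributes a number of sets that is finite for each $n$ but unbounded in $n$. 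So ``$x$ meets only finitely many $G_k$'' does not yield a single bound $k_x$ valid for all $n$, which is what Definition~\ref{def:3} demands. (The difficulty you do flag, about applying the order-$\le k$ refinement theorem to the possibly non-closed, non-paracompact subspaces $X_k$, is also real, though it is the most repairable of the three.) All of these issues are precisely what \cite[Theorem~5.1.10]{Eng1} absorbs, so the clean repair is to invoke it directly, as the paper does, rather than to reprove it via swellings.
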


\begin{proof}
Let $f\in C\overline{B}_\alpha
(X\times Y, Z)$. Using Definition~\ref{def:2}, we choose a sequence $(\varphi_{i,n}:i\in I_n))_{n=1}^\infty$ of locally finite partitions of unity $(\varphi_{i,n}:i\in  I_n)$ on $X$ and a sequence $((x_{i,n}:i\in I_n))_{n=1}^\infty$ of families of points $x_{i,n}\in X_{B_\alpha}(f)$ such that (\ref{eq:3}) holds. By \cite[Theorem~5.1.10]{Eng1} for every $n\in\mathbb N$ there exists a locally finite cover $(U_{j,n}:j\in J_n)$ of $X$ which refines $({\rm supp}\,\varphi_{i,n}:i\in I_n)$ and such that for every  $m\in\mathbb N$ and $x\in X_m$ there is a neighborhood $U$ of $x$ with \mbox{$\bigl|\{j\in J_n:U\cap U_{j,n}\ne\emptyset\}\bigr|\leq m$.} For every $n\in\mathbb N$ choose a locally finite partition of unity $(\psi_{j,n}:j\in
  J_n)$ on $X$ subordinated to $(U_{j,n}:j\in J_n)$. For every $j\in J_n$ denote by $u_{j,n}$ such an element $x_{i,n}$ that $U_{j,n}\subseteq {\rm supp}\,\varphi_{i,n}$. Let us observe that the sequences $((\psi_{j,n}:j\in  J_n))_{n=1}^\infty$ and $((u_{j,n}:j\in J_n))_{n=1}^\infty$ satisfy the conditions of Theorem~\ref{th:4.2}, hence \mbox{$f\in B_{\alpha+1}(X\times Y,Z)$.}\hfill$\Box$
\end{proof}

The following corollary generalizes Theorem~\ref{B}.

\begin{corollary}\label{cor:5.1} Let $X$ be a strongly countably dimensional metrizable space, $Y$ a topological space, $(Z,\lambda)$ an equiconnected space. Then $(X,Y,Z)$ is a Rudin $\alpha$-triple for every $0\le \alpha<\omega_1$. \end{corollary}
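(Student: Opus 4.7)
The plan is to deduce Corollary~\ref{cor:5.1} as a direct consequence of Theorem~\ref{th:4.3}. The hypothesis ``strongly countably dimensional'' is identical in both statements, so I only need to verify that every metrizable space $X$ satisfies the three remaining assumptions on $X$ required by Theorem~\ref{th:4.3}: Hausdorff, paracompact, and strong PP-space.

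Hausdorffness is immediate from metrizability. Paracompactness follows from Stone's classical theorem stating that every metrizable space is paracompact. For the strong PP-property, I would invoke the observation already recorded in the paper (just after Proposition~\ref{th:5.1}): ``any metrizable space\dots is a strong PP-space.'' If a more explicit justification is desired, I would fix a compatible metric $d$ on $X$, and for each $n\in\mathbb N$ choose (using paracompactness) a locally finite open refinement $\mathcal U_n = (U_{i,n}:i\in I_n)$ of the open cover $\{B_d(x,1/n):x\in X\}$, together with a subordinated locally finite partition of unity $(\varphi_{i,n}:i\in I_n)$ whose supports satisfy $\operatorname{supp}\varphi_{i,n}\subseteq \overline{U_{i,n}}$, and for every $i\in I_n$ pick a point $x_{i,n}\in U_{i,n}\cap D$, where $D$ is the prescribed dense subset of $X$. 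Given $x\in X$ and a neighborhood $U$ of $x$, choose $n_0$ so that $B_d(x,3/n_0)\subseteq U$; then whenever $n\geq n_0$ and $x\in\operatorname{supp}\varphi_{i,n}$, one has $d(x,x_{i,n})<3/n\le 3/n_0$, which yields $x_{i,n}\in U$ and establishes~(\ref{eq:3}).

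Once these three properties are confirmed, Theorem~\ref{th:4.3} applies verbatim to $X$, $Y$, $Z$ and any $0\le\alpha<\omega_1$, yielding that $(X,Y,Z)$ is a Rudin $\alpha$-triple.

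The main obstacle, if any, is the strong PP verification; but it is genuinely routine once one chooses the partition of unity refining the $1/n$-cover and picks the representative points $x_{i,n}$ inside $D$, exactly as in the Sorgenfrey-line proof of Proposition~\ref{th:5.1} (where intervals of length $1/n$ play the role of the $1/n$-balls). No further ingenuity is required beyond the standard paracompactness-based construction of partitions of unity subordinated to arbitrarily fine covers of a metric space.
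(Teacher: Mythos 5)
Your proposal is correct and follows exactly the route the paper intends: Corollary~\ref{cor:5.1} is stated as an immediate consequence of Theorem~\ref{th:4.3}, using that a metrizable space is Hausdorff, paracompact (Stone's theorem), and a strong PP-space (the observation recorded after Definition~\ref{def:2}). Your explicit verification of the strong PP-property via partitions of unity subordinated to locally finite refinements of the $1/n$-ball covers is sound and merely fills in a detail the paper leaves to the reader.
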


\section{The case $\alpha>0$}\label{sec:4}

\begin{definition}\label{def:5} {\rm A topological space $X$ is {\it contractible} if there exist a point $x^*\in X$ and a continuous function $\gamma:X\times
[0,1]\to X$ such that $\gamma(x,0)=x$ and $\gamma(x,1)=x^*$ for every $x\in X$.  A contractible space $X$ with such a function $\gamma$ and such a point $x^*$ we denote by $(X,\gamma, x^*)$.} \end{definition}

Clearly, every equiconnected space $(X,\lambda)$ is contractible.

\begin{proposition}\label{pr:7.1}
  Let $X$ and $Y$ be topological spaces, $(Z,\gamma, z^*)$ a contractible space, $0\le\alpha<\omega_1$, $(g_i)_{i\in I}$ a family of mappings \mbox{$g_i\in B_\alpha(Y,Z)$} and let $(\varphi_i)_{i\in I}$ be a family of continuous functions $\varphi_i:X\to [0,1]$ such that the family  $({\rm supp}\,\varphi_i)_{i\in I}$ is discrete in $X$. Then the mapping $f:X\times Y\to Z$,
  $$
  f(x,y)=\left\{\begin{array}{ll}
                  \gamma(g_{i}(y),1-\varphi_{i}(x)), & \mbox{if}\,\,\, x\in {\rm supp}\,\varphi_i \,\,\,\mbox{for some}\,\, i\in I, \\
                  z^*, & \mbox{else},
                \end{array}
  \right.
  $$
  is of the Baire class $\alpha$.
\end{proposition}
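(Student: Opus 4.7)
The plan is to argue by transfinite induction on $\alpha$, exploiting the fact that the discreteness of the family $(\mathrm{supp}\,\varphi_i)_{i\in I}$ forces $f$ to have a simple local form on $X\times Y$.

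The key preliminary observation is the following: for every $x_0\in X$, discreteness provides an open neighborhood $U$ of $x_0$ meeting at most one of the sets $\mathrm{supp}\,\varphi_i$, say $\mathrm{supp}\,\varphi_{i_0}$ (choosing an arbitrary index $i_0\in I$ if $U$ meets none). If $x\in U\cap\mathrm{supp}\,\varphi_{i_0}$, the definition gives $f(x,y)=\gamma(g_{i_0}(y),1-\varphi_{i_0}(x))$ directly. If $x\in U$ but $x\notin\mathrm{supp}\,\varphi_{i_0}$, then $\varphi_{i_0}(x)=0$ and $f(x,y)=z^*=\gamma(g_{i_0}(y),1)=\gamma(g_{i_0}(y),1-\varphi_{i_0}(x))$. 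Hence on all of $U\times Y$ the representation $f(x,y)=\gamma(g_{i_0}(y),1-\varphi_{i_0}(x))$ holds without case distinction.

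For the base case $\alpha=0$, the continuity of $\gamma$, $g_{i_0}$ and $\varphi_{i_0}$ together with the local formula shows that $f$ is continuous on the open neighborhood $U\times Y$ of each $(x_0,y_0)\in X\times Y$, so $f\in C(X\times Y,Z)=B_0(X\times Y,Z)$. For the inductive step, assume the statement for all $\xi<\alpha$. Choose ordinals $\alpha_n<\alpha$ with $\alpha_n+1\to\alpha$ and, for each $i\in I$, mappings $g_{i,n}\in B_{\alpha_n}(Y,Z)$ with $g_{i,n}(y)\to g_i(y)$ for every $y\in Y$. Define $f_n\colon X\times Y\to Z$ by the same formula as $f$, but with $g_{i,n}$ in place of $g_i$. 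The inductive hypothesis applied to the families $(g_{i,n})_{i\in I}$ and $(\varphi_i)_{i\in I}$ yields $f_n\in B_{\alpha_n}(X\times Y,Z)$. To verify pointwise convergence $f_n\to f$, fix $(x,y)\in X\times Y$: if $x$ lies in no $\mathrm{supp}\,\varphi_i$ then $f_n(x,y)=z^*=f(x,y)$; otherwise $x\in\mathrm{supp}\,\varphi_{i_0}$ for the index $i_0$ associated with some neighborhood of $x$, and continuity of $\gamma$ together with $g_{i_0,n}(y)\to g_{i_0}(y)$ gives $f_n(x,y)\to f(x,y)$. Consequently $f\in B_\alpha(X\times Y,Z)$.

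There is no serious obstacle; the one delicate point to check is precisely the gluing at the boundary of each $\mathrm{supp}\,\varphi_{i_0}$, where the two clauses in the definition of $f$ must agree. The identity $\gamma(z,1)=z^*$ built into contractibility is exactly what makes the two clauses match, and it is what enables the uniform local representation underpinning both the base case and the convergence argument in the inductive step.
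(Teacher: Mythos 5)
Your proof is correct and follows essentially the same route as the paper: the base case uses the same local representation $f(x,y)=\gamma(g_{i_0}(y),1-\varphi_{i_0}(x))$ on a neighborhood meeting at most one support (with the identity $\gamma(z,1)=z^*$ gluing the two clauses), and the inductive step is the same approximation scheme the paper invokes by reference to Theorem~\ref{th:2.3}. Your write-up merely makes explicit the details the paper leaves to the reader.
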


\begin{proof}  We first consider the case when $\alpha=0$.
Fix a point $x_0\in X$. Since the family $({\rm supp}\,\varphi_i)_{i\in I}$ is discrete, we can choose a neighborhood $U_0$ of $x_0$ such that  $\{i\in I: U_0\cap{\rm supp}\,\varphi_i\ne\emptyset\}\subseteq \{j\}$ for some $j\in I$. Then $f(x,y)=\gamma(g_{j}(y),1-\varphi_{j}(x))$ for all $x\in U_0$ and $y\in Y$. Hence, $f$ is continuous at every point of $\{x_0\}\times Y$.

Further, we use the induction on $\alpha$ analogously as in the proof of Theorem~\ref{th:2.3}.\hfill$\Box$
\end{proof}

Recall that a subset $A$ of a topological space $X$ is {\it a zero (co-zero) set} if there exists a continuous function $f:X\to [0,1]$ such that $A=f^{-1}(0)$ ($A=f^{-1}((0,1])$). A set $A$ is {\it functionally ambiguous} if there are two sequences $(A_n)_{n=1}^\infty$ and $(B_n)_{n=1}^\infty$  of zero-sets $A_n$ and co-zero sets $B_n$ such that $A=\bigcup\limits_{n=1}^\infty A_n=\bigcap\limits_{n=1}^\infty B_n$.

\begin{lemma}\label{lemma1.7}
Let $X$ be a topological space and ${\mathcal G}$ be a locally finite cover of $X$ by co-zero sets. Then there exists a disjoint locally finite cover of  $X$ by functionally ambiguous sets which refines ${\mathcal G}$ .
\end{lemma}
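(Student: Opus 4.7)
The plan is the standard ``disjointification'' argument: well-order the index set $I$ of $\mathcal G=\{G_i:i\in I\}$ and define
$$
H_i = G_i\setminus\bigcup_{j<i}G_j.
$$
Pairwise disjointness and the refinement $H_i\subseteq G_i$ are immediate from the definition; $(H_i)$ covers $X$ because for any $x$ the least index $i$ with $x\in G_i$ puts $x$ into $H_i$; and local finiteness of $(H_i)$ follows from $H_i\subseteq G_i$ and local finiteness of $\mathcal G$. The only nontrivial task is to verify that each $H_i$ is functionally ambiguous.

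The key auxiliary fact I would establish first is: \emph{for any locally finite family $(G_j)_{j\in J}$ of co-zero sets, the union $\bigcup_{j\in J}G_j$ is again co-zero}. Pick continuous $g_j:X\to[0,1]$ with $G_j=g_j^{-1}((0,1])$ and set $g(x)=\sum_{j\in J}g_j(x)$. Local finiteness forces all but finitely many summands to vanish in a neighborhood of each point, so $g$ is locally a finite sum of continuous functions and hence continuous; after replacing it by $g/(1+g)$ one obtains a continuous function $X\to[0,1)$ whose co-zero set is exactly $\bigcup_{j\in J}G_j$.

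Applying this to the initial segment $\{j\in I:j<i\}$ gives that $U_i:=\bigcup_{j<i}G_j$ is co-zero, so $Z_i:=X\setminus U_i$ is a zero set, say $Z_i=z_i^{-1}(0)$ for some continuous $z_i:X\to[0,1]$. Then $H_i=G_i\cap Z_i$, and one has the two representations
$$
H_i=\bigcup_{n=1}^{\infty}\bigl(\{x:g_i(x)\geq 1/n\}\cap Z_i\bigr)=\bigcap_{n=1}^{\infty}\bigl(G_i\cap z_i^{-1}([0,1/n))\bigr).
$$
Each set in the first union is an intersection of two zero sets (use $(\max(0,\tfrac1n-g_i))^2+z_i^2$), hence a zero set; each set in the second intersection is an intersection of two co-zero sets (observing that $z_i^{-1}([0,1/n))$ is the co-zero set of $\max(0,\tfrac{1}{n}-z_i)$), hence a co-zero set. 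Thus $H_i$ is functionally ambiguous.

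The main obstacle is the auxiliary statement about locally finite unions of co-zero sets: since $I$ may be uncountable, one cannot just invoke ``a countable union of co-zero sets is co-zero''. Local finiteness is precisely the ingredient that lets us build a single continuous witness by a locally finite (hence pointwise finite) sum, and once that is in hand the rest of the argument is bookkeeping with the standard closure properties of zero and co-zero sets under finite intersection.
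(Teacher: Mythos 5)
Your proposal is correct and follows essentially the same route as the paper: well-order the cover, disjointify by subtracting the union of the preceding members, observe that local finiteness makes that union a co-zero set, and conclude that each resulting difference of co-zero sets is functionally ambiguous. You simply spell out the two facts the paper leaves implicit (that a locally finite union of co-zero sets is co-zero, and that a difference of co-zero sets is functionally ambiguous), and both verifications are sound.
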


\begin{proof}
Write \mbox{${\mathcal G}=\{G_\alpha:0\le \alpha<\beta\}$}, where $\beta$ is an ordinal. Set \mbox{$A_0=G_0$}. For every $0<\alpha<\beta$ let
\mbox{$A_\alpha=G_\alpha\setminus \bigcup\limits_{\xi<\alpha} G_\xi$}. Since $\mathcal G$ is locally finite, the set $\bigcup\limits_{\xi<\alpha} G_\xi$ is a co-zero set. Then $A_\alpha$ is functionally ambiguous set as the difference of co-zero sets. Clearly, the family ${\mathcal A}=(A_\alpha:0\le\alpha<\beta)$ is to be found.\hfill$\Box$ \end{proof}

\begin{definition}{\rm
 A topological space  $X$ is said to be  {\it weakly collectionwise normal} if for an arbitrary discrete family $(F_s:s\in S)$ of zero sets in $X$
 there exists a discrete family $(U_s:s\in S)$ of co-zero sets in $X$ such that $F_s\subseteq U_s$ for all $s\in S$.}
\end{definition}

Let us observe that every collectionwise normal space is weakly collectionwise normal. Remark also that every metrizable space or, more general, a space equipped with the topology generated by a pseudo-metric, is weakly collectionwise normal.

\begin{proposition}\label{pr:21} Let $0<\alpha<\omega_1$, let $X$ be a weakly collectionwise normal space, $Y$ a topological space, $Z$ a contractible space,
$(X_s:s\in S)$ a disjoint locally finite cover of  $X$ by functionally ambiguous sets, $g_s\in B_\alpha(Y,Z)$ for every $s\in S$ and
$f(x,y)=g_s(y)$ if $(x,y)\in X_s\times Y$. Then $f\in B_\alpha(X\times Y,Z)$.
\end{proposition}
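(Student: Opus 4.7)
The plan is to combine the Baire-class approximation of each $g_s$ with the partition-of-unity construction of Proposition~\ref{pr:7.1}, via a diagonal procedure.

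First I would convert the functional ambiguity of each $X_s$ into continuous cut-off functions. Write $X_s=\bigcup_{n=1}^{\infty} A_{s,n}$ with $A_{s,n}$ an increasing sequence of zero sets (replacing the original decomposition by finite unions). Since $(X_s)_{s\in S}$ is disjoint and locally finite and $A_{s,n}\subseteq X_s$, for every fixed $n$ the family $(A_{s,n})_{s\in S}$ is a disjoint locally finite family of closed sets, hence discrete in $X$. Weak collectionwise normality provides a discrete family $(U_{s,n})_{s\in S}$ of co-zero sets with $A_{s,n}\subseteq U_{s,n}$. Because $A_{s,n}$ and $X\setminus U_{s,n}$ are disjoint zero sets, the standard formula $h=f/(f+g)$ applied to their defining functions yields a continuous $\varphi_{s,n}\colon X\to[0,1]$ with $\varphi_{s,n}=1$ on $A_{s,n}$ and $\varphi_{s,n}=0$ off $U_{s,n}$. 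Then ${\rm supp}\,\varphi_{s,n}\subseteq\overline{U_{s,n}}$, and since closures of a discrete family of open sets again form a discrete family, $({\rm supp}\,\varphi_{s,n})_{s\in S}$ is discrete.

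Next, fix a sequence of ordinals $\alpha_k<\alpha$ with $\alpha_k+1\to\alpha$ (constantly $\beta$ when $\alpha=\beta+1$). By the remark following the definition of the Baire classes, for every $s\in S$ one can choose mappings $g_{s,k}\in B_{\alpha_k}(Y,Z)$ with $g_{s,k}(y)\to g_s(y)$ for all $y\in Y$. For each $(k,n)\in\mathbb N\times\mathbb N$, Proposition~\ref{pr:7.1} applied to $(g_{s,k})_{s\in S}$ and $(\varphi_{s,n})_{s\in S}$ yields
\[
f_{k,n}(x,y)=\begin{cases}\gamma(g_{s,k}(y),1-\varphi_{s,n}(x)),&x\in{\rm supp}\,\varphi_{s,n}\text{ for some }s\in S,\\ z^*,&\text{otherwise,}\end{cases}
\]
with $f_{k,n}\in B_{\alpha_k}(X\times Y,Z)$.

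Finally, set $h_n=f_{n,n}$. Given $(x_0,y_0)\in X\times Y$, let $s_0$ be the unique index with $x_0\in X_{s_0}$. Since $A_{s_0,n}\nearrow X_{s_0}$, there exists $n_0$ such that $x_0\in A_{s_0,n}$ and hence $\varphi_{s_0,n}(x_0)=1$ for all $n\ge n_0$; discreteness of $(\overline{U_{s,n}})_{s\in S}$ together with $x_0\in U_{s_0,n}$ forces $x_0\notin{\rm supp}\,\varphi_{s,n}$ for every $s\ne s_0$. Therefore
\[
h_n(x_0,y_0)=\gamma(g_{s_0,n}(y_0),0)=g_{s_0,n}(y_0)\longrightarrow g_{s_0}(y_0)=f(x_0,y_0).
\]
Since each $h_n$ lies in $B_{\alpha_n}$ with $\alpha_n<\alpha$, the definition of $B_\alpha$ gives $f\in B_\alpha(X\times Y,Z)$. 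The main subtlety is the diagonalization: varying only the cut-offs (keeping $g_s$) or only the $g_{s,k}$ (keeping $\varphi_{s,n}$) would provide approximants of class $\alpha$, yielding merely $f\in B_{\alpha+1}$; varying both parameters simultaneously is what strictly lowers the class of the approximants below $\alpha$, as required.
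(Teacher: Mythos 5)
Your proof is correct and follows essentially the same route as the paper's: increasing zero-set exhaustions $X_s=\bigcup_n F_{s,n}$, discreteness from disjointness plus local finiteness, weak collectionwise normality to get discrete co-zero enlargements and cut-off functions $\varphi_{s,n}$, Proposition~\ref{pr:7.1} for the approximants, and pointwise convergence via eventual membership in $F_{s,n}$. The only cosmetic difference is that you index the approximants by two parameters and then diagonalize explicitly, whereas the paper builds the diagonal in from the start by using the same index $n$ for both $\varphi_{s,n}$ and $g_{s,n}$.
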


\begin{proof} For every $s\in S$ there exists an increasing sequence  $(F_{s,n})_{n=1}^\infty$ of zero sets such that
$X_s=\bigcup\limits_{n=1}^\infty F_{s,n}$. Since for every $n\in\mathbb N$ all the sets $F_{s,n}$ are closed, every family $(F_{s,n}:s\in S)$ is discrete in $X$.
Taking into account that $X$ is weakly collectionwise normal, for every $n\in\mathbb N$ we choose a discrete family $(U_{s,n}:s\in S)$ of co-zero sets in $X$ such that $F_{s,n}\subseteq U_{s,n}$ for all $s\in S$. Let $\varphi_{s,n}\in C(X,[0,1])$ be such functions that $F_{s,n}=\varphi_{s,n}^{-1}(1)$ and $U_{s,n}=\varphi_{s,n}^{-1}((0,1])$ for all $(s,n)\in S\times\mathbb N$.

 We choose an increasing sequence  $(\alpha_n)_{n=1}^\infty$ of ordinals $\alpha_n<\alpha$ such that $\alpha=\lim\limits_{n\to\infty}(\alpha_n+1)$ and for every $s\in S$ choose a sequence of functions  $g_{s,n}\in B_{\alpha_n}(Y,Z)$ which is pointwise convergent to $g_s$ on $Y$.

 Let $\gamma\in C(Z\times [0,1],Z)$ and $z_0\in Z$ be such that $\gamma(z,0)=z$ and $\gamma(z,1)=z_0$ for all $z\in Z$. Define a function $f_n:X\times Y\to Z$ in the following way:
  $$
  f_n(x,y)=\left\{\begin{array}{ll}
                  \gamma(g_{s,n}(y),1-\varphi_{s,n}(x)), & \mbox{if}\,\,\, x\in U_{s,n} \,\,\,\mbox{for some}\,\, s\in S, \\
                  z_0, & \mbox{else}.
                \end{array}
  \right.
  $$
    According to Proposition~\ref{pr:7.1},  $f_n\in B_{\alpha_n}(X\times Y,Z)$ for every $n\in\mathbb N$.

  It remains to prove that $f_n\to f$ on $X\times Y$. Indeed, let $(x,y)\in X\times Y$. Then there exists $s\in S$ such that $x\in X_s$. Since the sequence   $(F_{s,n})_{n=1}^\infty$ increases, there exists a number $n_0$ such that  $x\in F_{s,n}$ for all $n\ge n_0$. Then $f_n(x,y)=\gamma(g_{s,n}(y),0)=g_{s,n}(y)$ for all $n\ge
  n_0$. Hence, $$f_n(x,y)\to g_s(y)=f(x,y).$$ Therefore, $f\in B_\alpha(X\times Y,Z)$.\hfill$\Box$
\end{proof}

\begin{theorem}\label{th:6.6}
  Let $X$ be a strong PP-space, $Y$ a topological space, $Z$ a contractible space and $0<\alpha<\omega_1$. Then $(X,Y,Z)$ is a Rudin $\alpha$-triple.
\end{theorem}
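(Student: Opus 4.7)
The plan is to mirror the proof of Theorem~\ref{th:4.1}, replacing the $\lambda$-sum construction (which relied on the equiconnected locally convex structure of the range) by the piecewise construction provided by Proposition~\ref{pr:21}. The preceding Lemma~\ref{lemma1.7} and Proposition~\ref{pr:21} have been set up precisely for this purpose.

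Given $f\in C\overline{B}_\alpha(X\times Y,Z)$, I apply the strong PP-space property to the dense set $D:=X_{B_\alpha}(f)$ and obtain sequences $((\varphi_{i,n}:i\in I_n))_{n=1}^\infty$ of locally finite partitions of unity on $X$ together with $((x_{i,n}:i\in I_n))_{n=1}^\infty$ of points $x_{i,n}\in D$ satisfying (\ref{eq:3}); the slices $g_{i,n}(y):=f(x_{i,n},y)$ then lie in $B_\alpha(Y,Z)$. For each fixed $n$ the co-zero sets $\varphi_{i,n}^{-1}((0,1])$ form a locally finite cover of $X$, so after well-ordering $I_n$ and applying Lemma~\ref{lemma1.7} one obtains a disjoint locally finite cover $(X_{i,n}:i\in I_n)$ of $X$ by functionally ambiguous sets with $X_{i,n}\subseteq\varphi_{i,n}^{-1}((0,1])\subseteq{\rm supp}\,\varphi_{i,n}$. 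I then set
$$
f_n(x,y):=g_{i,n}(y)\quad\mbox{ for }(x,y)\in X_{i,n}\times Y,
$$
and Proposition~\ref{pr:21} yields $f_n\in B_\alpha(X\times Y,Z)$.

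Pointwise convergence $f_n\to f$ on $X\times Y$ is checked verbatim as at the end of the proof of Theorem~\ref{th:4.1}: fix $(x_0,y_0)\in X\times Y$ and a neighbourhood $W$ of $z_0:=f(x_0,y_0)$; continuity of $f$ in the first variable produces a neighbourhood $U$ of $x_0$ with $f(x,y_0)\in W$ for every $x\in U$, and property (\ref{eq:3}) supplies some $n_0$ such that for every $n\geq n_0$ every index $i$ with $x_0\in{\rm supp}\,\varphi_{i,n}$ satisfies $x_{i,n}\in U$. For $n\geq n_0$ let $i=i(n)$ be the unique element of $I_n$ with $x_0\in X_{i,n}$; then $x_0\in{\rm supp}\,\varphi_{i,n}$, hence $x_{i,n}\in U$ and $f_n(x_0,y_0)=f(x_{i,n},y_0)\in W$. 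Thus $f$ is a pointwise limit of Baire-$\alpha$ maps, so $f\in B_{\alpha+1}(X\times Y,Z)$, which is the desired conclusion.

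The obstacle I anticipate is that Proposition~\ref{pr:21} formally demands that $X$ be weakly collectionwise normal, a condition not contained in the definition of a strong PP-space. I would either verify this hypothesis directly for strong PP-spaces (via a short argument exploiting the abundance of continuous partitions of unity furnished by Definition~\ref{def:2}), or bypass the appeal to Proposition~\ref{pr:21} by noting that the continuous functions $\varphi_{i,n}$ themselves, paired with the zero/co-zero filtration of each $X_{i,n}$, can take the place of the discretely separated witnesses that Proposition~\ref{pr:21}'s proof extracts from weak collectionwise normality; Proposition~\ref{pr:21}'s inductive argument can then be rerun in our setting with $\varphi_{i,n}$ in place of the auxiliary functions constructed there.
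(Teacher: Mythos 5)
Your overall strategy coincides with the paper's: build $f_n$ piecewise constant in $x$ over a disjoint locally finite cover by functionally ambiguous sets (Lemma~\ref{lemma1.7}), invoke Proposition~\ref{pr:21} to conclude $f_n\in B_\alpha$, and prove $f_n\to f$ exactly as you do, via~(\ref{eq:3}) and continuity in the first variable. However, the obstacle you flag at the end is not a formality but the actual crux, and your proposal leaves it unresolved. Proposition~\ref{pr:21} genuinely needs weak collectionwise normality: its proof writes each piece $X_s$ as an increasing union of zero sets $F_{s,n}$ and then must expand the discrete family $(F_{s,n}:s\in S)$ to a \emph{discrete} family of co-zero sets in order to feed Proposition~\ref{pr:7.1}. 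The functions $\varphi_{i,n}$ do not supply such witnesses: their supports overlap and are only locally finite, while the pieces $X_{i,n}$ are carved out by a transfinite difference process, so there is no evident continuous function equal to $1$ on a prescribed zero subset of $X_{i,n}$ and supported in a set discrete from the others. Neither of your two suggested repairs is carried out, and it is not clear either one can be.

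The paper's resolution is a different device that you are missing: by \cite[Proposition~3.2]{M} there is a pseudo-metric on $X$ making all the $\varphi_{i,n}$ continuous; one passes to the weaker pseudo-metrizable topology $\mathcal T$ it generates. The space $(X,\mathcal T)$ is paracompact and weakly collectionwise normal, all sets ${\rm supp}\,\varphi_{i,n}$ are $\mathcal T$-open, so one refines to a $\mathcal T$-locally finite cover, applies Lemma~\ref{lemma1.7} and Proposition~\ref{pr:21} \emph{inside} $(X,\mathcal T)$, and then uses the inclusion $B_\alpha((X,\mathcal T)\times Y,Z)\subseteq B_\alpha(X\times Y,Z)$, which holds because $\mathcal T$ is weaker than the original topology. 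This step is what makes the hypothesis ``strong PP-space'' suffice without any separation assumption on $X$; without it (or an actual proof that strong PP-spaces are weakly collectionwise normal) your argument is incomplete.
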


\begin{proof}
  Let $f\in  C\overline{B}_\alpha(X\times Y,Z)$. For the set $X_{B_\alpha}(f)$ there exist a sequence $((\varphi_{i,n}:i\in
  I_n))_{n=1}^\infty$ of locally finite partitions of unity on $X$ and a sequence $((x_{i,n}:i\in I_n))_{n=1}^\infty$
  of families of points of $X_{B_\alpha}(f)$ such that condition~(\ref{eq:3}) holds.

  In accordance with \cite[Proposition~3.2]{M} there exists a pseudo-metric on $X$ such that all the functions $\varphi_{i,n}$ are continuous with respect to this pseudo-metric. Denote by $\mathcal T$ the topology on $X$ generated by the pseudo-metric. Obviously, the topology $\mathcal T$ is weaker than the initial one and all the sets ${\rm supp}\,\varphi_{i,n}$ are $\mathcal T$-open.
  Using the paracompactness of $(X,{\mathcal T})$, for every $n$ we choose a locally finite open cover ${\mathcal V}_n$ which refines the open cover \mbox{$({\rm supp}\,\varphi_{i,n}:i\in I_n)$.} By Lemma~\ref{lemma1.7} for every $n$ there exists a disjoint locally finite cover ${\mathcal A}_n=(A_{s,n}:s\in S_n)$ of
  $(X,{\mathcal T})$ by ambiguous sets which refines ${\mathcal V}_n$. For every $s\in S_n$ we choose any $i(s)\in I_n$ such that $A_{s,n}\subseteq
  {\rm supp}\,\varphi_{i(s),n}$. For all $n$ and $(x,y)\in X\times Y$ let
  $$
  f_n(x,y)=f^{x_{i(s),n}}(y),\,\,\mbox{if}\,\, x\in A_{s,n}.
  $$
  Since $f^{x_{i(s),n}}\in B_\alpha(Y,Z)$ for every $s\in S_n$ and $(X,{\mathcal T})$ is weakly collectionwise normal, Proposition~\ref{pr:21} implies that $f_n\in B_\alpha((X,{\mathcal T})\times Y,Z)\subseteq B_\alpha(X\times Y,Z)$ for every $n\in\mathbb N$.

  Now we   show that $f_n\to f$ on $X\times Y$. Fix $(x,y)\in X\times Y$. There exists a sequence $(s_n)_{n=1}^\infty$, $s_n\in S_n$, such that
 $x\in A_{s_n}\subseteq {\rm supp}\,\varphi_{i(s_n),n}$. Condition~(\ref{eq:3}) implies that $x_{i(s_n),n}\to x$. Since  $f$ is continuous in the first variable,
 $$
 f_n(x,y)=f({x_{i(s_n),n}},y)\to f(x,y).
 $$
  Hence, $f\in B_{\alpha+1}(X\times Y,Z)$.\hfill$\Box$
\end{proof}

\section{Vertically nearly separately continuous mappings which do not belong to the first Baire class}\label{sec:5}

Let $X=\{0\}\cup\bigcup\limits_{n=1}^\infty X_n$, where $X_n=\{\frac 1n\}\cup\bigcup\limits_{m=n^2}^\infty \{\frac{1}{n}+\frac{1}{m}\}$. We define a topology on $X$ in the following way. All points of the form $\frac 1n+\frac 1m$ will be isolated points of $X$. The base of neighborhoods of a point $\frac 1n$ are the sets of the form $X_n\setminus\bigcup\limits_{m=n^2}^k\{\frac 1n+\frac 1m\}$, $k=n^2,n^2+1,\dots$. Finally, as neighborhoods of $0$ we take all the sets obtained from  $X$ by removing a finite number of $X_n$'s and a finite number of points of the form $\{\frac 1n+\frac 1m\}$ in all the remaining $X_n$'s. The space $X$ with this topology is a sequential space that is not a Fr\'{e}chet space~\cite[Example 1.6.19]{Eng}.

The following example is a development of Example~5.7 from \cite{BanakhT} and shows that the condition that $X$ is strong PP-space is essential in Theorems~\ref{th:4.1},\ref{th:4.3} and \ref{th:6.6}. It then follows that Theorem~\ref{E} cannot be generalized to Rudin triples.

\begin{example}\label{ex:1}
  Let $0\le\alpha<\omega_1$, let $Y$ and $Z$ be topological spaces such that \mbox{$B_{\alpha+2}(Y,Z)\setminus B_{\alpha+1}(Y,Z)\ne\O$}. Then there exists a function \mbox{$f\in C\overline{B}_\alpha(X\times Y,Z)$}, which is not a pointwise limit of a sequence of functions $f_n\in CB_\alpha(X\times Y,Z)$.
\end{example}

\begin{proof}
 Let $g\in B_{\alpha+2}(Y,Z)\setminus B_{\alpha+1}(Y,Z)$. Then there exist a sequence $(g_n)^{\infty}_{n=1}$ of mappings $g_n\in B_{\alpha+1}(Y,Z)$ and a family $(g_{nm}:n,m\in\mathbb N)$ of mappings $g_{nm}\in B_\alpha(Y,Z)$ such that $g_n(y)\to g(y)$ for all $y\in Y$ and $g_{nm}(y)\to g_n(y)$, when $m\to\infty$, for all $n\in\mathbb N$ and $y\in Y$. Let $K=\{(n,m):n\in\mathbb N, m\in\mathbb N, m\geq n^2\}$, $x_0=0$, $x_n=\frac 1n$, $x_{nm}=\frac 1n+\frac 1m$, where $(n,m)\in K$, and consider the function $f:X\times Y\to Z$,
$$
f(x,y)=\left\{\begin{array}{ll}
                g(y), & x=x_0, \\
                g_n(y), & x=x_n, \\
                g_{nm}(y), & x=x_{nm}.
              \end{array}
\right.
$$
It is easy to verify that  $X_{B_\alpha}(f)=\{x_{nm}:(n,m)\in K\}$, $\overline{X_{B_\alpha}(f)}=X$ and $f_y\in C(X,Z)$ for every $y\in Y$. Hence, $f\in C\overline{B}_\alpha(X\times Y,Z)$.

Assume that there exists a sequence of functions $f_n\in CB_\alpha(X\times Y,Z)$ which pointwise converges to $f$. Then $f_n(0,y)\to f(0,y)=g(y)$ for all $y\in Y$, i.e. $g\in B_{\alpha+1}(Y,Z)$, a contradiction.\hfill$\Box$
 \end{proof}

\begin{lemma}\label{lemma8.1}

Let $X$ be a topological space, $a\in \mathbb R$, let $F, H$ be zero sets in $X$, $G$ a co-zero set in $X$ such that $H\subseteq F\cap G$. Then there exists a function $f:X\times \mathbb R\to\mathbb R$ with the followings properties:

$(1)$ $f$ is continuous in the first variable;

$(2)$ $f$ is jointly continuous on $(x,y)\in (X\setminus F)\times \mathbb R$;

$(3)$ $f(x,y)=0$ if $(x,y)\in (X\setminus G)\times \mathbb R$ or $(x,y)\in F\times (\mathbb R\setminus\{a\})$;

$(4)$ $f(x,a)=1$ if $x\in H$.
\end{lemma}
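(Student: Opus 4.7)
The plan is to separate the two roles that the target point $a$ and the set $F$ play. First, using that $F = \varphi^{-1}(0)$ and $H = \psi^{-1}(0)$ for some continuous $\varphi,\psi:X\to [0,1]$, and $G = \chi^{-1}((0,1])$ for some continuous $\chi:X\to [0,1]$, I will produce an auxiliary continuous ``bump'' $\theta:X\to [0,1]$ satisfying $\theta\equiv 1$ on $H$ and $\theta\equiv 0$ on $X\setminus G$. The standard ratio trick does this: set $\theta(x) = \chi(x)/(\chi(x)+\psi(x))$, which is well-defined because $\chi$ and $\psi$ vanish simultaneously only on $H\setminus G=\emptyset$.

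Next, I will define $f:X\times\mathbb R\to\mathbb R$ by
$$
f(x,y)=\begin{cases}
\theta(x)\cdot\dfrac{\varphi(x)}{\varphi(x)+(y-a)^2}, & x\notin F,\\[1ex]
\theta(x), & x\in F,\ y=a,\\[1ex]
0, & x\in F,\ y\ne a.
\end{cases}
$$
On $(X\setminus F)\times\mathbb R$ the formula is a product and ratio of jointly continuous functions with positive denominator, so condition $(2)$ is immediate. For property $(3)$: if $x\notin G$ then $\theta(x)=0$ forces $f(x,y)=0$ for every $y$; and if $x\in F$ then $f(x,y)=0$ for $y\ne a$ by the definition. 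For property $(4)$: if $x\in H$ then $x\in F$, so $f(x,a)=\theta(x)=1$.

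The only nontrivial point is continuity in the first variable at points $x_0\in F$, which I expect to be the main technical step. For fixed $y$ consider $x\to x_0\in F$; then $\varphi(x)\to 0$, so the ratio $\varphi(x)/(\varphi(x)+(y-a)^2)$ tends to $0$ if $y\ne a$ and equals $1$ whenever $\varphi(x)>0$ if $y=a$. Combined with continuity of $\theta$, one gets $f(x,y)\to 0 = f(x_0,y)$ for $y\ne a$ and $f(x,a)\to \theta(x_0)=f(x_0,a)$; meanwhile continuity of $f(\cdot,y)$ within $F$ reduces to continuity of $0$ or of $\theta$. This verifies $(1)$, completing the construction.
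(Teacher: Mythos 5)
Your construction is correct and follows the same architecture as the paper's proof: a cutoff in $x$ equal to $1$ on $H$ and $0$ off $G$, multiplied by a bump in $y$ centred at $a$ whose sharpness is governed by a function vanishing exactly on $F$ and which degenerates to $\chi_{\{a\}}$ there. The paper realizes this with a single Urysohn-type function satisfying $H=\varphi^{-1}(1)$ and $X\setminus G=\varphi^{-1}(0)$ together with a compactly supported cosine bump $\cos\bigl(\pi(y-a)/(2\psi(x))\bigr)$, whereas you use $\theta$ and the rational bump $\varphi(x)/\bigl(\varphi(x)+(y-a)^2\bigr)$; the verifications of $(1)$--$(4)$ are the same.
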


\begin{proof}
We pick continuous functions $\varphi:X\to [0,1]$ and $\psi:X\to[0,1]$ such that $H=\varphi^{-1}(1)$, $X\setminus G=\varphi^{-1}(0)$ and $F=\psi^{-1}(0)$. Moreover, consider the continuous function $g:\mathbb R\times (0,+\infty)\to [0,1]$,
$$
g(u,v)=\left\{\begin{array}{ll}
                \cos\frac{\pi (u-a)}{2v}, & |u-a|\leq v, \\
                0, & |u-a|> v.
              \end{array}
\right.
$$

Now consider the continuous function $f:X\times \mathbb R\to\mathbb R$,
$$
f(x,y)=\left\{\begin{array}{ll}
                \varphi(x)g(y,\psi(x)), & x\in X\setminus F, \\
                \varphi(x)\chi_{\{a\}}(y), & x\in F,
              \end{array}
\right.
$$
where by $\chi_{\{a\}}$ we denote the characteristic function of the set $\{a\}$. Let us observe that $(1)$ and $(2)$ follow from the continuity of functions  $\varphi$, $\psi$ and $g$ and the equality $\lim\limits_{v\to 0}g(u,v)=\chi_{\{a\}}(u)$ for every $u\in\mathbb R$. Conditions $(3)$ and $(4)$ immediately follow from the definition of $f$.\hfill$\Box$
\end{proof}

Denote by $\mathbb R^\infty$ the collection of all sequences with finite support, i.e. sequences of the form $(\xi_1,\xi_2,\dots,\xi_n,0,0,\dots)$, where $\xi_1,\xi_2,\dots,\xi_n\in\mathbb R$. Clearly, $\mathbb R^\infty$ is a linear subspace of the space $\mathbb R^{\mathbb N}$ of all sequences. Denote by $E$ the set of all sequences $e=(\varepsilon_n)_{n=1}^\infty$ of positive reals $\varepsilon_n$ and let $$U_e=\{x=(\xi_n)_{n=1}^\infty\in\mathbb R^\infty: (\forall n\in\mathbb N)(|\xi_n|\le \varepsilon_n)\}.$$
We consider on $\mathbb R^\infty$ the topology in which the system $\mathcal U_0=\{U_e:e\in E\}$ forms the base of neighborhoods of zero.

The following example shows that the analog of Theorem~\ref{th:4.3} is not true for strongly countably dimensional locally convex Hausdorff paracompact spaces.

\begin{example}\label{ex:2}
  Let $X=\mathbb R^\infty$ and $Y=Z=\mathbb R$. Then there exists a function \mbox{$f\in C\overline{C}(X\times Y,Z)$} which is not a pointwise limit of a sequence of separately continuous functions.
\end{example}

\begin{proof}
 For every $n\in\mathbb N$ let $$F_n=\{(\xi_1,\dots,\xi_n,0,0\dots)\in X: \max\{|\xi_1|,\dots,|\xi_n|\}\leq\frac{1}{n}\},$$
 $$\tilde{F}_n=\{(\xi_1,\dots,\xi_n,\xi_{n+1},\dots)\in X: \max\{|\xi_1|,\dots,|\xi_n|\}\leq\frac{1}{n}\},$$
 $$G_n=\{(\xi_1,\dots,\xi_n,\xi_{n+1},\dots)\in X: \max\{|\xi_1|,\dots,|\xi_n|\}<\frac{1}{n-\frac{1}{2}}\}$$
 and $$H_n=\bigcap\limits_{m=n}^{\infty}\left(\bigcup\limits_{k=n}^{m}F_k\cup \tilde{F}_m\right).$$
 It is easily seen that all the sets $H_n$ are zero sets, and all the sets $G_n$ are co-zero sets.

Let $\mathbb Q=\{r_n:n\in\mathbb N\}$, where $r_n\ne r_m$ for $n\ne m$. Fix $n\in\mathbb N$. For the space $X$, a number $a=r_n$ and for sets $F=H_1$, $H_n$ and $G_n$ there exists a function $f_n:X\times \mathbb R\to\mathbb R$ which satisfies $(1)-(4)$ of Lemma~\ref{lemma8.1}. We set $f(x,y)=\sum\limits_{n=1}^{\infty}f_n(x,y)$. We will show that \mbox{$f\in C\overline{C}(X\times Y,Z)$}, but $f$ is not a pointwise limit of a sequence of separately continuous functions.

Remark that $f_n(x,y)=\chi_{r_n}(y)$ for all $n\in\mathbb N$, $y\in Y$ and $x\in H_n$. Then for the point $x_0=(0,0,\dots)\in\bigcap\limits_{n=1}^{\infty}H_n$ the function $f^{x_0}$ is the Dirichlet function, which is not of the first Baire class. Hence, $f$ is not a pointwise limit of a sequence of separately continuous functions.

Now we prove that $X\setminus F\subseteq X_C(f)$. Let $x_1\in X\setminus F$. Since $x_1\ne x_0$, there exist a neighborhood $U\subseteq X\setminus F$ of $x_1$ and a number $n_0\in\mathbb N$ such that  $U\cap G_n=\emptyset$ for all $n\geq n_0$. Then, taking into account condition~$(3)$, for every $x\in U$ we have $f(x,y)=\sum\limits_{k=1}^{n_0}f_k(x,y)$ for all $y\in Y$. By condition~$(2)$, $f$ is continuous on $U\times Y$, in particular, $x_1\in X_C(f)$. Since $\overline{X\setminus F}=X$, $\overline{X_C(f)}=X$.

It remains to show that $f$ is continuous in the first variable. Fix $y_0\in Y$. Since the sequence $(G_n)_{n=1}^{\infty}$ is decreasing and $\{x_0\}=\bigcap\limits_{n=1}^{\infty}\overline{G_n}$, conditions  $(3)$ and $(1)$ imply that the function $f_{y_0}$ is continuous at all points $x\ne x_0$. Notice that a given function  $g:\mathbb R^\infty\to\mathbb R$ is continuous if and only if the restriction of $g$ to every space $X_n=\{(\xi_1,\dots,\xi_n,0\dots):\xi_1,\dots, \xi_n\in\mathbb R\}$ is continuous. Therefore, it is sufficient to show that every restriction of  $f_{y_0}$ to $X_n$ is continuous at $x_0$. Choose a number $m\in\mathbb N$ such that  $y_0\not\in\{r_k:k\geq m\}$. Then by condition $(3)$, $f(x,y_0)=\sum\limits_{k=1}^{m}f_k(x,y_0)$ for all $x\in F$. Since for every $n\in\mathbb N$ the set $F_n=F\cap X_n$ is a neighborhood of  $x_0$ in $X_n$, one has that $(1)$ imply the continuity of the restriction of $f_{y_0}$ on $X_n$ at the point $x_0$.\hfill$\Box$
 \end{proof}

 Remark that the spaces from Example~\ref{ex:1} and Example~\ref{ex:2} are PP-spaces. But combining these examples and Theorem~\ref{th:4.1} implies that they are not strong PP-spaces.

Let us observe that the inclusion \mbox{$C\overline C(X\times Y, \mathbb R)\subseteq B_{1}(X\times Y, \mathbb R)$} for any topological space $Y$ does not imply that $X$ is a strong PP-space. For example, the inclusion holds for any topological space $X$ on which every continuous real-valued function  is constant. These spaces, in general, need not by regular (see \cite[p.~119]{Eng}) and (strong) $PP$-spaces. Therefore, it is natural to ask the following question.

\begin{question}\label{qu8.4}
Does there exist a completely regular space $X$ which is not a (strong) PP-space and \mbox{$C\overline C
(X\times Y, \mathbb R)\subseteq B_{1}(X\times Y, \mathbb R)$} for any topological space $Y$?
\end{question}

{\small

}

\end{document}